\title[A solvable counterexample]{A solvable counterexample to the Hambleton-Taylor-Williams Conjecture}
\author{Julia Semikina}
        \address{Universit\"at Bonn\\
        Mathematisches Institut\\
                Endenicher Allee 60\\
                53115 Bonn, Germany}
         \email{juliasem@math.uni-bonn.de}
         \date{}
\keywords{$G$-theory, group rings, maximal orders, Hambleton-Taylor-Williams Conjecture, special linear group, modular representations}
    \subjclass[2010]{19D99, 19B28, 20C10, 20C20}
\theoremstyle{plain}         
\newenvironment{customthm}[1]
  {\innercustomthm}
  {\endinnercustomthm}
\newtheorem{theorem}{Theorem}
\newtheorem*{theorem*}{Theorem}
\newtheorem{definition}[theorem]{Definition}
\newtheorem{proposition}[theorem]{Proposition}
\newtheorem*{conjecture*}{Conjecture}
\newtheorem{lemma}[theorem]{Lemma}
\newtheorem{corollary}[theorem]{Corollary}
\newcommand{\End}{\mathrm{End}}
\newcommand{\Gal}{\mathrm{Gal}}
\newcommand{\Aut}{\mathrm{Aut}}
\newcommand{\rank}{\mathrm{rank}}
\newcommand{\SL}{\mathrm{SL}}
\newcommand{\ord}{\mathrm{ord}}
\newcommand{\overbar}[1]{\mkern 3mu\overline{\mkern-3mu#1\mkern-4mu}\mkern 4mu}
\begin{document}

\typeout{----------------------------  localization.tex  ----------------------------}

%%%%%%%%%%%%%%%%%%%%%%%%%%%%%%%%%%%%%%%%%%%%%%%%%%%%%%%%%%%%%%%%%%%%%%%%%%%%%%%%%
%%%%%%%%%%%%%%%%%%%%%%%%%%%%%%%%%%% Abstract  %%%%%%%%%%%%%%%%%%%%%%%%%%%%%%%%%%%%%%%
%%%%%%%%%%%%%%%%%%%%%%%%%%%%%%%%%%%%%%%%%%%%%%%%%%%%%%%%%%%%%%%%%%%%%%%%%%%%%%%%%

\typeout{------------------------------------ Abstract ----------------------------------------}

\begin{abstract}
In \cite{HTW} I. Hambleton, L. Taylor and B. Williams conjectured a general formula in spirit of H. Lenstra \cite{Le} for the decomposition of $G_n(RG)$ for any finite group $G$ and noetherian ring $R.$ The conjectured decomposition was shown to hold for some large classes of finite groups. In \cite{WeY} D. Webb and D. Yao discovered that the conjecture failed for the symmetric group $S_5$, but remarked that it still might be reasonable to expect the HTW-decomposition for solvable groups. In this paper we show that the solvable group $\SL(2, \mathds{F}_3)$ is also a counterexample to the conjectured HTW-decomposition. Furthermore, we prove that for any finite group $G$ the rank of $G_1(\mathds{Z}G)$ does not exceed the rank of the expression in the HTW-decomposition.
\end{abstract}

\maketitle

%%%%%%%%%%%%%%%%%%%%%%%%%%%%%%%%%%%%%%%%%%%%%%%%%%%%%%%%%%%%%%%%%%%%%%%%%%%%%%%%%
%%%%%%%%%%%%%%%%%%%%%%%%%%%%%%%%%% Introduction %%%%%%%%%%%%%%%%%%%%%%%%%%%%%%%%%%%%%
%%%%%%%%%%%%%%%%%%%%%%%%%%%%%%%%%%%%%%%%%%%%%%%%%%%%%%%%%%%%%%%%%%%%%%%%%%%%%%%%%

%\tableofcontents

 \typeout{-------------------------------   Section 0: Introduction --------------------------------}

\setcounter{section}{0}
\section*{Introduction}

In \cite{Le} Lenstra obtained a beautiful explicit formula for the decomposition of Grothendieck group $G_0(RG)$ of the group ring $RG$ for abelian group $G$ and noetherian ring $R.$ Namely, if $C(G)$ denotes the set of all cyclic quotients of $G$ (isomorphic quotients coming from different subgroups of $G$ are considered to be different), then 
$$G_0(RG) \cong \bigoplus_{C \in C(G)}   G_0 \Big( R \otimes _{\mathds{Z}} \mathds{Z}\big[ \xi_{|C|}, \frac{1}{|C|} \big] \Big),$$ where $|C|$ is the order of a cyclic group $C$, and $\xi_{|C|}$ is a primitive $|C|$-th root of unity. In \cite{We1}, \cite{We2} Webb proved the same decomposition formula for $G_n(RG), ~n>0$ for abelian group $G$, and obtained decomposition formulas for $G_n(\mathds{Z}G)$ for certain nonabelian groups $G$, in particular for dihedral and generalized quaternion groups \cite{We3}. In \cite{HTW} Hambleton, Taylor and Williams conjectured the general formula in spirit of Lenstra for the decomposition of $G_n(RG)$ for any finite group $G$ and noetherian ring $R.$ Their conjecture is consistent with all the previous results of Lenstra and Webb. Moreover it was shown to hold for finite nilpotent groups \cite{HTW}, \cite{We4} and groups of square-free order \cite{We5}. The conjecture was also proved for $G_0(\mathds{Z}G)$, where $G$ is a group of odd order having cyclic Sylow subgroups \cite{LaWe}. 

In order to state the conjecture let $G$ be a finite group, and let $\rho \colon G \to \Aut(V_{\rho})$ be a rational irreducible representation of $G.$ Then there is an associated division algebra $D_{\rho}=\End_{\mathds{Q}G}(V_{\rho})$ and we have the following Wedderburn decomposition of the rational group algebra (\cite{Se}, p. 92) 
$$\mathds{Q}G\cong \prod_{\rho} M_{n_{\rho}}(D_{\rho}^{op}),$$
where $\rho$ ranges over the set $X(G)$ of isomorphism classes of all rational irreducible representations of $G.$ 

For a such representation $\rho \colon G \to \Aut(V_{\rho}),$ let $k_{\rho}$ be the order of the kernel of the representation $\rho$ and let $d_{\rho}$ be the dimension of any of the irreducible complex constituents of $\mathds{C} \otimes_{\mathds{Q}} V_{\rho}.$ Let $\omega_{\rho}=\frac{|G|}{k_{\rho}d_{\rho}}.$ We remark that $\omega_{\rho}$ is an integer. 
 Indeed, the kernel of the irreducible rational representation $\rho$ coincides with the kernel of any of the irreducible complex constituents of $\mathds{C} \otimes_{\mathds{Q}} V_{\rho}.$ Hence such a constituent is a complex irreducible representation of the quotient group $G/ \ker{\rho}.$ Therefore, the integrality of $\omega_{\rho}$ follows from the fact that the dimension of a complex irreducible representation of a group divides the order of a group (\cite[p. 52]{Se}).

Let $\Lambda_{\rho}$ be a maximal $\mathds{Z}[1/ \omega_{\rho}]$-order in $D_{\rho}.$ Hambleton, Taylor and Williams conjectured the following decomposition formula, which we call HTW-decomposition.

\begin{conjecture*}[Hambleton-Taylor-Williams]
Let $G$ be a finite group and $R$ a noetherian ring. Then $$G_n(RG) \cong \bigoplus_{\rho \in X(G)} G_n(R \otimes \Lambda_{\rho}), ~~\forall n \geq 0.$$
\end{conjecture*}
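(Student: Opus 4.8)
The plan is to prove the stated isomorphism by comparing $RG$ with a maximal order and confining the discrepancy to a finite set of ``bad'' primes via Quillen's localization theorem. First I would fix a maximal $\mathds{Z}$-order $\Gamma \subseteq \mathds{Q}G$ containing $\mathds{Z}G$. The Wedderburn decomposition $\mathds{Q}G \cong \prod_{\rho} M_{n_{\rho}}(D_{\rho}^{op})$ induces a splitting $\Gamma = \prod_{\rho} \Gamma_{\rho}$ with each $\Gamma_{\rho}$ a maximal order in $M_{n_{\rho}}(D_{\rho}^{op})$, and Morita invariance of $G$-theory gives
\[
G_n(R \otimes \Gamma) \cong \bigoplus_{\rho} G_n(R \otimes \Lambda_{\rho}^{\max}),
\]
where $\Lambda_{\rho}^{\max}$ is a maximal $\mathds{Z}$-order in $D_{\rho}$. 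Thus the right-hand side of the conjecture is obtained from the $G$-theory of a maximal order by the single additional operation of inverting, in each $\rho$-block, the primes dividing $\omega_{\rho} = |G|/(k_{\rho} d_{\rho})$. The whole problem is therefore to show that passing from $\mathds{Z}G$ to $\Gamma$ alters $G$-theory exactly by this block-by-block localization.

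Next I would run the localization sequence for the multiplicative set $S = \mathds{Z}\setminus\{0\}$ simultaneously for $RG$ and for $R\otimes\Gamma$. Devissage identifies the torsion fibers, producing a long exact sequence
\[
\cdots \to \bigoplus_{p} G_n(R \otimes \mathds{F}_p G) \to G_n(RG) \to G_n(R \otimes \mathds{Q}G) \to \cdots
\]
together with its analogue for $\Gamma$, compatible via the inclusion $\mathds{Z}G \hookrightarrow \Gamma$. On the generic fiber, Morita invariance gives $G_n(R\otimes\mathds{Q}G) \cong \bigoplus_{\rho} G_n(R\otimes D_{\rho})$, which already matches both sides and is $\rho$-block diagonal. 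Since $\mathds{Z}_{(p)}G$ is itself maximal for every prime $p \nmid |G|$, the two sequences differ only in the fibers at primes dividing $|G|$, so the task reduces to computing these finitely many modular fibers and the relative term $G_*(\Gamma/\mathds{Z}G)$, and to showing that their net effect on $G_n(\mathds{Z}G)$ is precisely inversion of the primes dividing $\omega_{\rho}$ in the $\rho$-component.

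To handle the modular fibers I would first invoke Dress induction to reduce an arbitrary $G$ to its $p$-hyperelementary subquotients, where the block structure of $\mathds{F}_p G$ is fully understood; I would then use the $p$-modular decomposition matrix to organize the $p$-regular Brauer characters over the ordinary (hence rational) irreducibles, attaching to each rational $\rho$ the part of the fiber lying in the blocks that reduce from $\rho$. The exponent $\omega_{\rho}$ is tailored so that, block by block, inverting its prime divisors annihilates exactly the corresponding modular contribution; verifying this matching is the arithmetic content of the conjecture.

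The hard part will be this last step: the localization boundary maps and the transfer maps relating $\bigoplus_{p} G_n(R\otimes \mathds{F}_p G)$ to $G_n(\mathds{Z}G)$ need not respect the rational Wedderburn splitting. A single $p$-modular block can reduce from several distinct rational representations $\rho$, so the modular fiber does not a priori distribute over the $\rho$-components in the clean fashion the formula demands; one must show that, after accounting for the ramification of each $D_{\rho}$, the boundary maps are precisely those predicted by $\omega_{\rho}$. Controlling these maps --- equivalently, proving that the relative $G$-theory $G_*(\Gamma/\mathds{Z}G)$ splits compatibly with the central idempotents of $\mathds{Q}G$, which do not lie in $\mathds{Z}G$ --- is where every known argument succeeds only under additional hypotheses on $G$ (nilpotent, or square-free order), and it is exactly this compatibility that is the most delicate point of the proof.
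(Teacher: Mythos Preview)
The statement you are trying to prove is not a theorem in the paper; it is the Hambleton--Taylor--Williams \emph{Conjecture}, and the paper's main purpose is to show that it is \emph{false}. Theorem~A exhibits $G=\SL(2,\mathds{F}_3)$ as a counterexample: for this group the rank $R(G)$ of $G_1(\mathds{Z}G)$, computed via Keating's formula, equals $5$, whereas the rank $P(G)$ of $\bigoplus_{\rho}G_1(\Lambda_{\rho})$ equals $6$. Hence no argument can establish the conjectured isomorphism for all finite $G$, and your proposal necessarily contains a genuine gap.

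You have in fact located the gap yourself. Your final paragraph concedes that the crucial step---showing that the relative $G$-theory $G_*(\Gamma/\mathds{Z}G)$ splits compatibly with the central idempotents of $\mathds{Q}G$, equivalently that the boundary maps in the localization sequence respect the $\rho$-block decomposition in the way encoded by $\omega_{\rho}$---is only known under extra hypotheses (nilpotent, square-free order). That is not merely ``the most delicate point of the proof''; it is the point where the argument \emph{fails}. For $\SL(2,\mathds{F}_3)$ the $2$-modular fiber does not distribute over the rational components as the formula predicts: the paper's computation shows that the contribution of the primes above $2$ to $\sum_{\rho}v_{\rho}$ exceeds what is accounted for by $\sum_{\rho}w_{\rho}$, and the discrepancy is not absorbed by the number of irreducible $\mathds{F}_2$-representations (there are only two). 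Theorem~B of the paper explains the general mechanism: one always has $P(G)\geq R(G)$, with equality precisely when the reduction-mod-$p$ map on $p$-special representations, followed by Galois descent to $\mathds{F}_p$, is surjective and orbit-preserving; for $\SL(2,\mathds{F}_3)$ at $p=2$ it is not. So the compatibility you hoped to verify is simply not there in general, and your outline should be read as a heuristic for why the conjecture is plausible rather than as a proof.
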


It turned out that in general this conjecture does not hold. Webb and Yao \cite{WeY} showed that the formula failed for the symmetric group $S_5$, but remarked that it still might be reasonable to expect that the HTW-decomposition holds for solvable groups. In this paper we provide a solvable counterexample to the Hambleton-Taylor-Williams Conjecture. 
\begin{customthm}{A} \label{counter}
The group $\SL(2, \mathds{F}_3)$ does not satisfy the HTW-decomposition.
\end{customthm}
To prove this we use the same source of contradiction as in \cite{WeY}, namely, the rank of $G_1(\mathds{Z}G)$. For a finite group $G$ we consider the following two numbers 
\begin{itemize}
\item $R(G)$=the actual rank of $G_1(\mathds{Z}G)$ computed by Keating;
\item  $P(G)$=the rank of $G_1(\mathds{Z}G)$ predicted by the HTW-decomposition.
\end{itemize}
We start with an abstract description of $R(G)$ and $P(G)$ in Section 1.  In Section 2 we apply this description to compute the difference $P(G)-R(G)$ for the solvable group $G=\SL(2, \mathds{F}_3).$ The difference turns out to be non-zero and therefore we conclude that the group $\SL(2, \mathds{F}_3)$ is a counterexample to the Hambleton-Taylor-Williams Conjecture. 

In Section 3 we prove the general inequality estimating the number of modular irreducible representations of a finite group $G$ in terms of rational irreducible representations of $G.$ Let $E_{\rho}$ be the center of $D_{\rho}$ and let $\mathcal{O}_{\rho}$ be the ring of algebraic integers in $E_{\rho}.$
\begin{customthm}{B}\label{Inequality} 
Let $G$ be any finite group and let $p$ be a prime integer that divides the order of $G$. Then 
\[\# \{ \text{irreducible $\mathds{F}_{p}$-representations of $G$} \} \geq \sum_{\rho \in I_p} t_{\rho},\]
where $I_p$ is the set of all rational irreducible representations $\rho$ of $G$ for which the corresponding number $\omega_{\rho}$ is not divisible by $p$, and $t_{\rho}$ is the number of different prime ideals in $\mathcal{O}_{\rho}$ that divide the principal ideal $(p).$
\end{customthm}
As a corollary of Theorem~\ref{Inequality} we obtain that $P(G) \geq R(G)$ for any finite group $G.$ The proof of the inequality gives an explanation of the failure of the HTW-decomposition for $G_1(\mathds{Z}G).$

\subsection*{Acknowledgments.}

This work is a part of the author's ongoing dissertation project at the University of Bonn, it is supported by Bonn International Graduate School. The author thanks her supervisor Wolfgang L\"uck for his helpful suggestions and advices.

\section{The computation of ranks of $G_1$-groups}
\subsection{The description of $R(G)$}
Let $G$ be a finite group. An abelian group $G_1(\mathds{Z}G)$ is completely determined by Keating \cite{Ke}. To present Keating's formula for the rank of $G_1(\mathds{Z}G)$ we use the same notations as in the introduction. Namely, $\rho \colon G \to \Aut(V_{\rho})$ denotes a rational irreducible representation of $G,$ $D_{\rho}=\End_{\mathds{Q}G}(V_{\rho})$ is the corresponding division algebra, and rational group algebra decomposes as  
$\mathds{Q}G\cong \prod_{\rho \in X(G)} M_{n_{\rho}}(D^{op}_{\rho}).$
 
 \begin{theorem}[Keating's rank formula] \label{Ke}
Let $\Gamma_{\rho}$ be the maximal $\mathds{Z}$-order in the center of $D_{\rho}.$ Let $r_{\rho}$ be the rank of group of units in $\Gamma_{\rho},$ and let $v_{\rho}$ be the number of primes of $\Gamma_{\rho}$ that divide $|G|.$ Let $\varepsilon$ be the number of isomorphism classes of simple $\mathds{Z}G$-modules annihilated by $|G|.$ Then 
$$\rank\, G_1(\mathds{Z}G)= \sum _{\rho \in X(G)} (r_{\rho}+ v_{\rho}) -\varepsilon.$$

 \end{theorem}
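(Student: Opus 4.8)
The plan is to derive Keating's formula from Quillen's localization sequence in $G$-theory, comparing $\mathds{Z}G$ with a maximal order away from the primes dividing $|G|$ and treating those primes separately.

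\textbf{Step 1 (localization).} Since $\mathds{Z}_{(q)}G$ is a maximal $\mathds{Z}_{(q)}$-order in $\mathds{Q}G$ for every prime $q\nmid|G|$, the ring $\mathds{Z}[1/|G|]G$ is a maximal $\mathds{Z}[1/|G|]$-order in $\mathds{Q}G$, hence hereditary, so $G_n(\mathds{Z}[1/|G|]G)=K_n(\mathds{Z}[1/|G|]G)$ by the resolution theorem. The quotient of the category of finitely generated $\mathds{Z}G$-modules by the Serre subcategory of those supported at the primes dividing $|G|$ is the category of finitely generated $\mathds{Z}[1/|G|]G$-modules, and by d\'evissage the $G$-theory of that subcategory is $\bigoplus_{p\mid|G|}G_\ast(\mathds{F}_pG)$. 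Quillen's localization sequence therefore gives
\[
\bigoplus_{p\mid|G|}G_1(\mathds{F}_pG)\longrightarrow G_1(\mathds{Z}G)\longrightarrow K_1(\mathds{Z}[1/|G|]G)\xrightarrow{\ \partial\ }\bigoplus_{p\mid|G|}G_0(\mathds{F}_pG)\xrightarrow{\ \beta\ }G_0(\mathds{Z}G).
\]
Since $\mathds{F}_pG$ is artinian, d\'evissage also shows $G_1(\mathds{F}_pG)$ is a finite direct sum of $K_1$'s of finite fields, hence finite, so it does not affect ranks; thus $\rank\, G_1(\mathds{Z}G)=\rank\, K_1(\mathds{Z}[1/|G|]G)-\rank(\operatorname{im}\partial)$.

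\textbf{Step 2 (the maximal-order term).} By the Wedderburn decomposition $\mathds{Q}G\cong\prod_\rho M_{n_\rho}(D_\rho^{op})$ and maximality, $\mathds{Z}[1/|G|]G\cong\prod_\rho M_{n_\rho}(\Lambda_\rho')$ with $\Lambda_\rho'$ a maximal $\mathds{Z}[1/|G|]$-order in $D_\rho^{op}$, so Morita invariance gives $K_1(\mathds{Z}[1/|G|]G)\cong\bigoplus_\rho K_1(\Lambda_\rho')$. For such $\Lambda_\rho'$ the reduced norm is a homomorphism $K_1(\Lambda_\rho')\to\Gamma_\rho[1/|G|]^\times$ with finite kernel and image of finite index: finiteness of the kernel ($SK_1$) for orders in semisimple algebras over number fields follows from Wang's theorem together with a localization argument, and the image is pinned down by the Hasse--Schilling--Maass norm theorem. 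Hence $\rank\, K_1(\Lambda_\rho')=\rank\,\Gamma_\rho[1/|G|]^\times$, which by Dirichlet's $S$-unit theorem equals $r_\rho+v_\rho$ (inverting the $v_\rho$ primes of $\Gamma_\rho$ above $|G|$ adds $v_\rho$ to the unit rank $r_\rho$). Summing over $\rho$ gives $\rank\, K_1(\mathds{Z}[1/|G|]G)=\sum_\rho(r_\rho+v_\rho)$.

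\textbf{Step 3 (the boundary term), and the main obstacle.} By exactness $\operatorname{im}\partial=\ker\beta$, and $\bigoplus_{p\mid|G|}G_0(\mathds{F}_pG)$ is free of rank $\sum_{p\mid|G|}\#\{\text{simple }\mathds{F}_pG\text{-modules}\}$. A simple $\mathds{Z}G$-module is precisely a simple $\mathds{F}_pG$-module for a unique prime $p$, and it is annihilated by $|G|$ exactly when $p\mid|G|$; hence that rank is $\varepsilon$. It remains to show $\operatorname{im}\beta$ is torsion in $G_0(\mathds{Z}G)$ — equivalently that every simple $\mathds{F}_pG$-module with $p\mid|G|$ is a torsion class, equivalently that $\rank\, G_0(\mathds{Z}G)=\#X(G)$. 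This is a known structural fact about $G$-theory of integral group rings; one proof combines $[L/pL]=0$ in $G_0(\mathds{Z}G)$ for every $\mathds{Z}G$-lattice $L$, the dependence of $[L/pL]\in G_0(\mathds{F}_pG)$ only on $[\mathds{Q}\otimes L]$, and Brauer's $cde$ triangle (nonsingularity of the Cartan matrix), carried out with care about the fields of definition of the modular irreducibles. Granting it, $\rank(\operatorname{im}\partial)=\varepsilon$, and together with Steps 1--2 this gives $\rank\, G_1(\mathds{Z}G)=\sum_\rho(r_\rho+v_\rho)-\varepsilon$. I expect Step 3 to be the main obstacle: the inequality $\rank(\operatorname{im}\partial)\le\varepsilon$ is automatic, but the reverse inequality is precisely the place where the non-semisimplicity of $\mathds{F}_pG$ enters, and it requires genuine modular-representation-theoretic input, whereas the identification of $K_1$ of the maximal order in Step 2, though it appeals to several classical theorems, is conceptually routine.
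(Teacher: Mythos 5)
The paper itself gives no proof of this statement --- it is quoted from Keating \cite{Ke} --- so the comparison is with the literature rather than with an in-paper argument; your localization strategy is the natural modern route and is close in spirit to Keating's original reduction to maximal orders and to unit groups of their centers. Steps 1 and 2 are essentially correct: the d\'evissage/localization sequence, the finiteness of $G_1(\mathds{F}_pG)$, and the rank computation $\rank K_1(\mathds{Z}[1/|G|]G)=\sum_\rho(r_\rho+v_\rho)$ all go through (one small imprecision: a maximal order in $M_{n_\rho}(D_\rho^{op})$ need not literally be $M_{n_\rho}(\Lambda'_\rho)$, only Morita equivalent to a maximal order of $D_\rho^{op}$, which is all you use; alternatively Step 2 follows at once from Theorem~\ref{Lam} applied with $R=\mathds{Z}[1/|G|]$, exactly as the paper does for $\Lambda_\rho$).

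Step 3 is indeed the crux, and the fact you need --- that the classes of simple $\mathds{F}_pG$-modules with $p\mid|G|$ generate a finite subgroup of $G_0(\mathds{Z}G)$, equivalently $\rank G_0(\mathds{Z}G)=\#X(G)$ --- is a genuine classical theorem (Heller--Reiner \cite{HeRe}; see also Curtis--Reiner), so invoking it is legitimate and with it your proof is complete. However, one ingredient in your sketch of that fact points in a direction that cannot work: relying on ``$[L/pL]$ depends only on $[\mathds{Q}\otimes L]$'' suggests reaching the simple classes rationally through reductions of \emph{global} lattices, i.e.\ through the image of the decomposition map $G_0(\mathds{Q}G)\to G_0(\mathds{F}_pG)$. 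That map is in general not surjective even after tensoring with $\mathds{Q}$: its image consists of Brauer-character functions constant on $\mathds{Q}$-classes of $p$-regular elements, while $G_0(\mathds{F}_pG)\otimes\mathds{Q}$ is governed by the finer $\mathds{F}_p$-classes --- precisely the discrepancy this paper exploits in Sections 2--3. The correct completion goes through projectives, where only the localization at $p$ matters: by nonsingularity of the Cartan matrix of $\mathds{F}_pG$ it suffices to show $\beta([P])=0$ for every finitely generated projective $\mathds{F}_pG$-module $P$; lift $P$ to a projective $\mathds{Z}_{(p)}G$-module $N$ (idempotent lifting, plus the standard correspondence of projectives over $\mathds{Z}_{(p)}G$ and $\hat{\mathds{Z}}_pG$), choose a full $\mathds{Z}G$-lattice $M\subseteq N$ with $M_{(p)}=N$; then $M/pM\cong P$, and the exact sequence $0\to M\xrightarrow{p}M\to P\to 0$ gives $\beta([P])=[M]-[M]=0$ in $G_0(\mathds{Z}G)$. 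With this fix (or by simply citing the known rank statement for $G_0(\mathds{Z}G)$), $\rank\,\mathrm{im}\,\beta=0$, $\rank\,\mathrm{im}\,\partial=\varepsilon$, and Keating's formula follows as you state.
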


Denote by $E_{\rho}$ the center of  $D_{\rho}.$ Obviously it is an algebraic number field and moreover $E_{\rho} / \mathds{Q}$ is an abelian extension (see \cite[Lemma 2.8]{BaRo}). Denote by $\mathcal{O}_{\rho}$ the ring of algebraic integers in $E_{\rho}$ (it is the maximal $\mathds{Z}$-order in $E_{\rho}$). 
Since $E_{\rho}$ is an abelian number field it is either totally real or totally imaginary. Let $m_{\rho}= [ E_{\rho} : \mathds{Q}].$
Then by Dirichlet's unit theorem the rank $r_{\rho}$ of the group of units in $\mathcal{O}_{\rho}$ is equal to $\#\{$real embeddings of $E_{\rho} \} +\# \{$conjugate pairs of complex embeddings of $E_{\rho} \}-1.$ 

Therefore
\begin{equation*}
                    r_{\rho}=\begin{cases}
                        m_{\rho}-1, \text{~if $E_{\rho}$ is totally real}, \\
                        m_{\rho} /2 -1, \text{~if $E_{\rho}$ is totally complex.}
                    \end{cases}
\end{equation*}

It is clear that

$$\varepsilon= \sum_{p | n} \# \{ \text{isomorphism classes of simple $\mathds{F}_p G$-modules}\},$$ 
where the sum ranges over all prime numbers that divide the order $n$ of the group $G$. To compute the number of irreducible $\mathds{F}_p$-representations of $G$ explicitly we first establish some notation. 

\begin{definition}
An element of a group $G$ is called $p$-regular if its order is not divisible by the prime number $p$.
\end{definition}

Denote by $d$ the L.C.M. of orders of all $p$-regular elements in $G$. 
%Let $\xi_d$ be a primitive $d$-th root of unity over $\mathds{F}_p$. The Galois group $\Gal(\mathds{F}_p(\xi_d)/\mathds{F}_p)$ is a cyclic group generated by an $\mathds{F}_p$-automorphism $\sigma$ sending $\xi_d$ to $\xi^{p}_d.$ 
Let $k$ be the smallest positive integer such that $p^k=1$ (mod $d$). 
%then $$\Gal(\mathds{F}_p(\xi_d)/\mathds{F}_p)=\{ \sigma^{i} ~|~i=1,2,\ldots,k \}.$$ 
Denote by $T$ the multiplicative group of exponents $\{{p^i} ~|~ i=1,2,\ldots,k \}$ modulo $d$.

\begin{definition}
Two $p$-regular elements $g_1, g_2  \in G$ are called $\mathds{F}_p$-conjugate if $g^{t}_1=h g_2 h^{-1}$ for some $t \in T$ and  $h \in G.$
\end{definition}

 \begin{theorem}[Berman \cite{Be}]
 The number of irreducible representations of $G$ over $\mathds{F}_p$ equals the number of $p$-regular $\mathds{F}_p$-conjugacy classes.
 
 \end{theorem}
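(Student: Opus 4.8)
The plan is to count the simple $\mathds{F}_pG$-modules (equivalently, the simple modules of the semisimple algebra $\mathds{F}_pG/\mathrm{rad}$) by passing to the algebraic closure and then descending. Since $\overline{\mathds{F}}_p/\mathds{F}_p$ is separable, the Jacobson radical commutes with this base change, so $\mathds{F}_pG/\mathrm{rad}$ stays semisimple after $\otimes_{\mathds{F}_p}\overline{\mathds{F}}_p$ and equals $\overline{\mathds{F}}_pG/\mathrm{rad}$; hence the simple $\mathds{F}_pG$-modules are in bijection with the $\Gal(\overline{\mathds{F}}_p/\mathds{F}_p)$-orbits on the set of simple $\overline{\mathds{F}}_pG$-modules, i.e. with the Frobenius-orbits on the absolutely irreducible modular representations of $G$.

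Next I would prove Brauer's splitting-field count: the number of simple $\overline{\mathds{F}}_pG$-modules equals the number of $p$-regular conjugacy classes of $G$. For this one uses that, for any field $K$ of characteristic $p$ and $A=KG$, the number of simple $A$-modules equals $\dim_K A/T(A)$ with $T(A)=\{a\in A : a^{p^n}\in[A,A]\text{ for some }n\}$, where $[A,A]$ is the $K$-span of the additive commutators; this reduces to the semisimple quotient $A/\mathrm{rad}$ together with the identity $\mathrm{tr}(x^{p^n})=(\mathrm{tr}\,x)^{p^n}$ for matrices over $K$. The conjugacy classes of $G$ form a $K$-basis of $KG/[KG,KG]$, and $a\mapsto a^p$ induces a $p$-semilinear endomorphism $\theta$ on it with $\theta(\bar g)=\overline{g^p}$, so $T(KG)/[KG,KG]$ is the union of the kernels of the iterates of $\theta$ (which stabilizes). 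Over $K=\overline{\mathds{F}}_p$, grouping classes according to the conjugacy class of the $p'$-part of a representative and using $\sum_i x_i^{p^n}=(\sum_i x_i)^{p^n}$, this kernel is cut out by exactly one independent linear relation for each $p$-regular class; therefore $\dim_K KG/T(KG)$ is the number of $p$-regular classes.

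Finally I would match the two actions on finite sets. Under the bijection ``simple $\overline{\mathds{F}}_pG$-module $\leftrightarrow$ its Brauer character'', twisting by Frobenius corresponds to precomposing Brauer characters with $g\mapsto g^p$: since the Teichm\"uller lift is multiplicative, the eigenvalues of $g$ on the Frobenius twist are the $p$-th powers of its eigenvalues on the original module, hence the eigenvalues of $g^p$. On classes this is the permutation $[g]\mapsto[g^p]$ of the $p$-regular classes, so the number of irreducible $\mathds{F}_p$-representations is the number of orbits of $\langle[g]\mapsto[g^p]\rangle$ on $p$-regular classes; because $g^{p^i}$ depends only on $p^i$ modulo $\ord(g)$, a divisor of $d$, these orbits coincide with the $p$-regular $\mathds{F}_p$-conjugacy classes of the definition (with $T=\{p^i\bmod d\}$). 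The one step with genuine content here is the computation of $T(KG)$ over the algebraically closed field (Brauer's theorem); the extension $\mathds{F}_p\to\overline{\mathds{F}}_p$ only uses that $\mathds{F}_p$ is perfect, and identifying the Galois action is formal.
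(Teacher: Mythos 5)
The paper itself offers no proof of this statement (it is quoted from Berman's paper), so your proposal is judged on its own merits. Its architecture is the standard one: Brauer's splitting-field count via $T(A)=\{a\in A: a^{p^n}\in[A,A]\}$, plus Galois descent from a splitting field of characteristic $p$ down to $\mathds{F}_p$, and those two steps are fine as sketched (for the descent it is cleaner to replace $\overline{\mathds{F}}_p$ by the finite splitting field $\mathds{F}_{p^k}$ with $p^k\equiv 1 \pmod d$, so that the Curtis--Reiner result (74.5), quoted as Theorem~\ref{orbit} in Section~3, applies verbatim; over finite fields all Schur indices are $1$). The genuine gap is in your last step, which is precisely the content of Berman's theorem beyond Brauer's count. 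You correctly identify that the Frobenius twist acts on irreducible Brauer characters by $\beta\mapsto\beta\circ(g\mapsto g^p)$ and that $[g]\mapsto[g^p]$ permutes the $p$-regular classes, and you then conclude ``so the number of irreducible $\mathds{F}_p$-representations is the number of orbits of $\langle[g]\mapsto[g^p]\rangle$ on $p$-regular classes.'' But the simple $\overline{\mathds{F}}_pG$-modules and the $p$-regular classes are two different finite sets that merely have the same cardinality; the same cyclic group acts on both, one action being precomposition of functions with a permutation of the other set, and this does not by itself force the two actions to have the same number of orbits. That identification is not ``formal''.

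To close the gap you need two further inputs. First, distinct simple $\overline{\mathds{F}}_pG$-modules have distinct Brauer characters, and in fact the irreducible Brauer characters are linearly independent; together with Brauer's count this makes them a basis of the space of complex-valued class functions on $p$-regular elements. (You already use the injectivity tacitly when you invoke ``the bijection module $\leftrightarrow$ Brauer character''.) Second, the linear operator ``precompose with $g\mapsto g^p$'' on that space permutes both the basis of indicator functions of $p$-regular classes and the basis of irreducible Brauer characters, and it has finite order since $p^k\equiv 1\pmod d$. For each power of this operator the number of fixed vectors in a permuted basis equals its trace, and the trace is basis-independent; Burnside's orbit-counting lemma then gives that the number of Frobenius orbits on irreducible Brauer characters equals the number of orbits of $[g]\mapsto[g^p]$ on $p$-regular classes, i.e.\ the number of $p$-regular $\mathds{F}_p$-conjugacy classes. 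With these two points supplied, your outline becomes a complete proof.
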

 
 Hence $$\varepsilon= \sum_{p | n} \# \{ \text{$p$-regular $\mathds{F}_p$-conjugacy classes} \}.$$ 
 
 Now we have very computable description of the summands appearing in Theorem~\ref{Ke} and we denote by $R(G)$ the rank of $G_1(\mathds{Z}G).$ More precisely we have
 
 $$R(G) = \sum_{\rho \in X(G)} (r_{\rho} + v_{\rho}) -\varepsilon.$$
 
\subsection{The description of $P(G)$}

In this subsection we are going to compute the rank of $G_1(\mathds{Z}G)$ as predicted by the Hambleton-Taylor-Williams Conjecture. If HTW-decomposition holds for a group $G$ and any noetherian coefficient ring $R$, then taking $R=\mathds{Z}$ gives 
 $$G_1(\mathds{Z}G) \cong \bigoplus_{\rho \in X(G)} G_1(\Lambda_{\rho})$$
 and
 \[ \rank \, G_1(\mathds{Z}G) = \sum_{\rho \in X(G)} \rank\, G_1(\Lambda_{\rho}).
\]

Denote by $P(G)$ the rank of $G_1(\mathds{Z}G)$ that is predicted by the HTW-decomposition, i.e., $P(G)\coloneqq \sum_{\rho \in X(G)} \rank\, G_1(\Lambda_{\rho})$.  We are going to give an explicit description of $P(G)$ and compare it with the real rank $R(G)$ computed in the previous subsection. Obviously, if $P(G) \neq R(G)$, then the group $G$ does not satisfy the HTW-decomposition.

For a rational irreducible representation $\rho$ we determine the rank of $G_1(\Lambda_{\rho})$ by applying the following theorem (\cite{L}, \cite{SwEv}).

 \begin{theorem}[Lam] \label{Lam}
Let $R$ be a Dedekind ring with quotient field $K$ and let $A$ be a separable semisimple $K$-algebra such that

\begin{enumerate}

\item[(i)] $R/\mathfrak{p}$ is finite for every non-zero prime ideal $\mathfrak{p} \subset R$, and

\item[(ii)] if $L$ is a finite separable field extension of $K$ and $S$ is the integral closure of $R$ in $L,$ then the class group $Cl(S)$ is a torsion group. 

\end{enumerate}
Let $Z(A)$ be the center of $A$, $\tilde{R}$ be the integral closure of $R$ in $Z(A),$ and $\Lambda$ be an $R$-order in $A.$
Then $$G_1(\Lambda) \underset{\text{mod torsion}} {\cong}  \tilde{R}^{\times}.$$

 \end{theorem}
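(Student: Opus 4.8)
The plan is to read off $G_1(\Lambda)$ modulo torsion from the Quillen localization sequence in $G$-theory; the reason to use $G$-theory rather than $K$-theory is precisely that it is well behaved for the order $\Lambda$, which need not be regular. Let $\mathcal{T} = \mathcal{T}(\Lambda)$ be the abelian category of finitely generated $\Lambda$-modules that are torsion over $R$, i.e.\ annihilated by some nonzero element of $R$. It is a Serre subcategory of the category of all finitely generated $\Lambda$-modules, with quotient category equivalent to the category of finitely generated $A$-modules via $M \mapsto M \otimes_R K$, so Quillen's localization theorem yields an exact sequence
\[
\cdots \to G_1(\mathcal{T}) \to G_1(\Lambda) \to G_1(A) \xrightarrow{\ \partial\ } G_0(\mathcal{T}) \to G_0(\Lambda) \to G_0(A) \to 0 .
\]

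First I would pin down $G_\ast(\mathcal{T})$ by d\'evissage. Writing $\mathcal{T}$ as the union of the subcategories $\mathcal{T}_{\mathfrak p}$ of modules supported at a single maximal ideal $\mathfrak p \subset R$ gives $G_\ast(\mathcal{T}) \cong \bigoplus_{\mathfrak p} G_\ast(\mathcal{T}_{\mathfrak p})$; filtering an object of $\mathcal{T}_{\mathfrak p}$ by powers of $\mathfrak p$ and applying d\'evissage identifies $G_\ast(\mathcal{T}_{\mathfrak p})$ with the $G$-theory of finitely generated modules over $\Lambda / \mathfrak p \Lambda$, which is a \emph{finite} ring by hypothesis (i); one more d\'evissage, modulo the Jacobson radical, together with Morita invariance gives $G_\ast(\mathcal{T}_{\mathfrak p}) \cong \bigoplus_i G_\ast(\mathds{F}_{q_i})$. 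Since $K_1$ of a finite field is finite, $G_1(\mathcal{T})$ is torsion, while $G_0(\mathcal{T})$ is free abelian on the isomorphism classes of simple torsion $\Lambda$-modules. Tensoring the localization sequence with $\mathds{Q}$, which is exact, therefore gives
\[
G_1(\Lambda) \otimes \mathds{Q} \;\cong\; \ker\!\bigl(\partial \otimes \mathds{Q} \colon G_1(A)\otimes\mathds{Q} \longrightarrow G_0(\mathcal{T})\otimes\mathds{Q}\bigr),
\]
and, both groups being finitely generated, this is equivalent to the asserted isomorphism modulo torsion.

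It then remains to identify the right-hand side with $\tilde{R}^{\times} \otimes \mathds{Q}$. Decompose $A \cong \prod_j M_{n_j}(D_j)$ with $L_j = Z(D_j)$, so that $Z(A) = \prod_j L_j$ and $\tilde{R} = \prod_j \mathcal{O}_j$, where $\mathcal{O}_j$ is the integral closure of $R$ in $L_j$. Morita invariance identifies $G_1(A) = K_1(A)$ with $\bigoplus_j K_1(D_j)$, and the reduced norm gives maps $\mathrm{Nrd}_j \colon K_1(D_j) \to L_j^{\times}$ with torsion kernel $SK_1(D_j)$ and torsion cokernel — this is where the arithmetic input sits, and where hypotheses (i)--(ii) are used, via the finiteness of $SK_1$ of a division algebra (Wang's theorem and its generalizations) and the Hasse--Schilling--Maass description of the image of the reduced norm. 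Hence $G_1(A) \otimes \mathds{Q} \cong Z(A)^{\times} \otimes \mathds{Q}$. Under this identification I would show that $\partial \otimes \mathds{Q}$ becomes the divisor map $Z(A)^{\times} \to \bigoplus_{\mathfrak q} \mathds{Q}$ sending $x$ to $(\ord_{\mathfrak q}(\mathrm{Nrd}\, x))_{\mathfrak q}$, with $\mathfrak q$ running over the nonzero primes of $\tilde{R}$; its kernel is exactly the unit group of $\tilde{R}$, which finishes the proof.

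The hard part is this last step: showing that the boundary map only ``sees'' the primes of the maximal order $\tilde{R}$, and not the extra simple modules that a non-maximal $\Lambda$ contributes to $G_0(\mathcal{T})$. I would handle it by choosing a maximal order $\Gamma \supseteq \Lambda$ in $A$ and comparing the two localization sequences: restriction of scalars along $\Lambda \hookrightarrow \Gamma$ induces $G_\ast(\Gamma) \to G_\ast(\Lambda)$, and since the conductor of $\Lambda$ in $\Gamma$ contains a power of a nonzero ideal of $R$ the quotients $\Lambda / \mathfrak c$ and $\Gamma / \mathfrak c$ are finite, so one checks that this map is a rational isomorphism in degree $1$. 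This reduces the whole problem to a maximal — hence hereditary, hence regular — order $\Gamma$, for which $G_1(\Gamma) = K_1(\Gamma)$, the sequence is the classical localization sequence for $K$-theory of maximal orders, $\Gamma / \mathfrak p \Gamma$ modulo its radical is a product of matrix algebras over finite fields indexed precisely by the primes of $\tilde{R}$ above $\mathfrak p$, and the boundary map is the valuation map, as claimed.
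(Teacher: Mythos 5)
First, note that the paper itself gives no proof of this statement: it is quoted as a known theorem from Lam and Swan--Evans and then applied only to the maximal orders $\Lambda_{\rho}$ over $R=\mathds{Z}[1/\omega_{\rho}]$. So your proposal can only be judged on its own merits. Its skeleton --- Quillen localization for the Serre subcategory of $R$-torsion modules, d\'evissage to finite rings to see that $G_1(\mathcal{T})$ is torsion and $G_0(\mathcal{T})$ is free on the simple torsion modules, and then, for a \emph{maximal} order, identification of $\partial$ with the valuation map on reduced norms so that $\ker(\partial\otimes\mathds{Q})=\tilde R^{\times}\otimes\mathds{Q}$ --- is exactly the standard argument and is fine (modulo the point that "mod torsion" should ultimately be witnessed by a map such as the reduced norm, since in the stated generality, e.g.\ $R=\mathds{Z}_p$, neither side is finitely generated, so "$\otimes\,\mathds{Q}$ isomorphic" does not by itself give "isomorphic mod torsion").

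The genuine problem is the step you yourself single out as the hard part. The claim that restriction of scalars $G_1(\Gamma)\to G_1(\Lambda)$ along $\Lambda\subseteq\Gamma$ is a rational isomorphism, i.e.\ that the boundary map for a non-maximal $\Lambda$ "only sees" the primes of $\tilde R$, is false, and no conductor argument will fix it. Take $\Lambda=\mathds{Z}C_2\subset\Gamma=\mathds{Z}\times\mathds{Z}$ in $A=\mathds{Q}C_2$: by the Lenstra--Webb decomposition $G_1(\mathds{Z}C_2)\cong G_1(\mathds{Z})\oplus G_1(\mathds{Z}[1/2])$ has rank $1$, while $G_1(\Gamma)\cong\tilde R^{\times}=\{\pm1\}^2$ has rank $0$. (Concretely, $G_0(\mathcal{T}(\Lambda))$ at $p=2$ has only one simple module, whereas $G_0(\mathcal{T}(\Gamma))$ has two, so $\ker\partial_{\Lambda}$ is rationally strictly larger than $\ker\partial_{\Gamma}$; the class of $(2,1/2)\in K_1(\mathds{Q}C_2)$ lies in the former but not the latter.) This extra rank of $G_1$ of non-maximal orders at the singular primes is precisely the phenomenon encoded in Keating's terms $\sum v_{\rho}-\varepsilon$ and exploited throughout the paper, so the statement as transcribed cannot hold for an arbitrary $R$-order $\Lambda$; it must be read (as the paper in fact uses it) for maximal orders, or for $K_1$ in place of $G_1$. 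If you restrict to maximal $\Lambda$ --- where $\Lambda$ is hereditary, $G_1=K_1$, and $\Lambda/\mathfrak{p}\Lambda$ modulo its radical has simple modules indexed by the primes of $\tilde R$ over $\mathfrak{p}$ --- then the remaining parts of your sketch do constitute a correct proof, with the arithmetic input (finiteness of $SK_1$ and the Hasse--Schilling--Maass description of reduced norms, plus hypotheses (i)--(ii)) entering exactly where you indicate.
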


Let $R=\mathds{Z}[1/ \omega_{\rho}],~K=\mathds{Q}$ and $A=D_{\rho}.$ The prime ideals in the localization $\mathds{Z}[1/ \omega_{\rho}]$ are of the form $q \mathds{Z}[1/ \omega_{\rho}],$ where $q$ is a prime integer that does not divide $\omega_{\rho}$. Hence the first condition of Theorem~\ref{Lam} is satisfied, because $$\mathds{Z}[1/ \omega_{\rho}] / q \mathds{Z}[1/ \omega_{\rho}] \cong (\mathds{Z} / q \mathds{Z})[1/ \omega_{\rho}] \cong \mathds{Z} / q \mathds{Z}.$$

If $L$ is an algebraic number field and $\mathcal{O}_{L}$ is a ring of algebraic integers in $L$, then the integral closure of $\mathds{Z}[1/ \omega_{\rho}]$ in $L$ is $\mathcal{O}_{L}[1/ \omega_{\rho}].$ To check that the second condition of Theorem~\ref{Lam} holds it is enough to show that $Cl(\mathcal{O}_{L}[1/ \omega_{\rho}])$ is finite. Recall that for a Dedekind ring $\mathcal{R}$ the ideal class group of $\mathcal{R}$ is given by the quotient $Cl(\mathcal{R})=\mathcal{I}_{\mathcal{R}} / \mathcal{P}_{\mathcal{R}},$ where $\mathcal{I}_{\mathcal{R}}$ is the group of fractional ideals of $\mathcal{R}$ and $\mathcal{P}_{\mathcal{R}}$ is the subgroup of principal fractional ideals of $\mathcal{R}.$ Let $\phi \colon \mathcal{I}_{\mathcal{O}_{L}} \to \mathcal{I}_{\mathcal{O}_{L}[1/\omega_{\rho}]}$ be a map sending a fractional ideal $I$ of $\mathcal{O}_{L}$ to the fractional ideal  $\mathcal{O}_{L}[1/ \omega_{\rho}] \otimes_{\mathcal{O}_{L}} I $ of $\mathcal{O}_{L}[1/ \omega_{\rho}].$ This map is obviously a group homomorphism. Moreover $\phi$ takes principal fractional ideals of $\mathcal{O}_{L}$ to principal fractional ideals of $\mathcal{O}_{L}[1/\omega_{\rho}]$ and is surjective,  since $\mathcal{O}_{L}$ and $\mathcal{O}_{L}[1/\omega_{\rho}]$ have the same field of fractions. Therefore $\phi$ induces a surjective group homomorphism 

$$\bar{ \phi} \colon Cl (\mathcal{O}_{L}) \twoheadrightarrow Cl \big( \mathcal{O}_{L}  \Big[ \frac{1}{\omega_{\rho}} \Big] \big).$$

  Now finiteness of the group $Cl (\mathcal{O}_{L}[1/\omega_{\rho}])$ follows from the surjectivity of $\bar{\phi}$ and the classical result that the ideal class group of the ring of integers in an algebraic number field is finite.
  
 The integral closure of $\mathds{Z}[1/\omega_{\rho}]$ in $Z(D_{\rho})=E_{\rho}$ is $\mathcal{O}_{\rho}[1/\omega_{\rho}].$ Therefore Theorem~\ref{Lam} implies $$G_1(\Lambda_{\rho}) \underset{\text{mod torsion}}{\cong} \big( \mathcal{O}_{\rho} \Big[ \frac{1}{\omega_{\rho}} \Big] \big) ^{\times}.$$
 
To determine the rank of units in $\mathcal{O}_{\rho}[1/\omega_{\rho}]$ we use the $S$-unit theorem (see, for example, \cite[p. 88]{Ne}). Let us briefly recall its statement. Let $L$ be an algebraic number field and let $S$ be a finite set of prime ideals in $\mathcal{O}_{L}.$ Define $$\mathcal{O}_{L}(S) \coloneqq \{ x \in L ~| ~ord_{\mathfrak{p}}(x) \geq 0, \text{ for all prime ideals } \mathfrak{p} \notin S \}.$$
 Define the group $U(S)$ of $S$-units to be 
 $$U(S) \coloneqq \big( \mathcal{O}_{L}(S)\big) ^{\times}=\{ x \in L ~| ~ord_{\mathfrak{p}}(x)=0, \text{ for all } \mathfrak{p} \notin S \}.$$

\begin{theorem}[S-unit theorem] \label{S}
The group of S-units is finitely generated and $$\rank \, U(S) =\rank \, \mathcal{O}^{\times}_{L}   + \#S.$$
\end{theorem}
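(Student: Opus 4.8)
The plan is to study $U(S)$ through the valuation homomorphism into $\mathds{Z}^{\#S}$. Concretely, I would consider
$$\lambda \colon U(S) \to \bigoplus_{\mathfrak{p} \in S} \mathds{Z}, \qquad x \mapsto \big( \ord_{\mathfrak{p}}(x) \big)_{\mathfrak{p} \in S}.$$
By the defining condition of $U(S)$, an element $x \in L^{\times}$ lies in $\ker \lambda$ exactly when $\ord_{\mathfrak{p}}(x) = 0$ for \emph{every} non-zero prime $\mathfrak{p}$ of $\mathcal{O}_{L}$, i.e. exactly when $x \in \mathcal{O}_{L}^{\times}$. This gives a short exact sequence
$$0 \to \mathcal{O}_{L}^{\times} \to U(S) \xrightarrow{\ \lambda\ } \im \lambda \to 0,$$
in which $\im \lambda$ is a subgroup of the finitely generated free abelian group $\mathds{Z}^{\#S}$.

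First I would record that $U(S)$ is finitely generated: $\mathcal{O}_{L}^{\times}$ is finitely generated by the classical Dirichlet unit theorem, $\im \lambda$ is finitely generated as a subgroup of $\mathds{Z}^{\#S}$, and an extension of finitely generated abelian groups is finitely generated. Since rank is additive along short exact sequences of finitely generated abelian groups (tensor with $\mathds{Q}$), it then remains only to show $\rank \im \lambda = \#S$, equivalently that $\im \lambda$ has finite index in $\mathds{Z}^{\#S}$.

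For this I would fix, for each $\mathfrak{p} \in S$, a positive integer $h_{\mathfrak{p}}$ such that $\mathfrak{p}^{h_{\mathfrak{p}}}$ is a principal ideal; such an $h_{\mathfrak{p}}$ exists because the ideal class group $Cl(\mathcal{O}_{L})$ is finite, e.g. one may take $h_{\mathfrak{p}}$ to be the class number of $L$. Writing $\mathfrak{p}^{h_{\mathfrak{p}}} = (\alpha_{\mathfrak{p}})$ with $\alpha_{\mathfrak{p}} \in \mathcal{O}_{L}$, the element $\alpha_{\mathfrak{p}}$ has $\ord_{\mathfrak{q}}(\alpha_{\mathfrak{p}}) = 0$ at every finite prime $\mathfrak{q} \neq \mathfrak{p}$, so $\alpha_{\mathfrak{p}} \in U(S)$ and $\lambda(\alpha_{\mathfrak{p}}) = h_{\mathfrak{p}}\, e_{\mathfrak{p}}$, where $e_{\mathfrak{p}}$ is the standard basis vector indexed by $\mathfrak{p}$. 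Hence $\im \lambda$ contains the subgroup $\bigoplus_{\mathfrak{p} \in S} h_{\mathfrak{p}} \mathds{Z}\, e_{\mathfrak{p}}$, which has finite index in $\mathds{Z}^{\#S}$, so $\rank \im \lambda = \#S$. Together with the exact sequence this yields $\rank U(S) = \rank \mathcal{O}_{L}^{\times} + \#S$.

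The exactness of the displayed sequence and additivity of rank are routine; the one genuinely arithmetic ingredient — and the step I would flag as the heart of the argument — is the construction of the $\alpha_{\mathfrak{p}}$, which rests on finiteness of the ideal class group. Note that this is precisely the finiteness input already used above to produce the surjection $\bar{\phi}\colon Cl(\mathcal{O}_{L}) \twoheadrightarrow Cl(\mathcal{O}_{L}[1/\omega_{\rho}])$, so no new ingredient beyond the classical finiteness theorem is required.
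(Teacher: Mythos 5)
Your proof is correct: the valuation map $\lambda$, the identification $\ker\lambda=\mathcal{O}_{L}^{\times}$, additivity of rank, and the use of class-group finiteness to produce $\alpha_{\mathfrak{p}}$ with $\lambda(\alpha_{\mathfrak{p}})=h_{\mathfrak{p}}e_{\mathfrak{p}}$ all work exactly as you say. The paper gives no proof of this statement at all --- it is quoted as a classical result with a reference to Neukirch --- and your argument is precisely the standard proof found there, so nothing further is needed.
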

 
Let $S$ be the set of all prime ideals in $\mathcal{O}_{\rho}$ that divide $\omega_{\rho}.$ Then  $\mathcal{O}_{E_{\rho}}(S)=\mathcal{O}_{\rho}[1/\omega_{\rho}]$ and $U(S)=\big( \mathcal{O}_{\rho} [ 1/\omega_{\rho} ] \big) ^{\times}$. Hence by Theorem~\ref{S}
$$\rank\,\big( \mathcal{O}_{\rho} [ 1/\omega_{\rho} ] \big) ^{\times}=\rank\, \mathcal{O}^{\times}_{\rho}+\#\{\text{prime ideals in } \mathcal{O}_{\rho} \text{ that divide } \omega_{\rho}\}$$
and finally
$$\rank\,G_1(\Lambda_{\rho})=r_{\rho}+\#\{\text{prime ideals in } \mathcal{O}_{\rho} \text{ that divide } \omega_{\rho}\}.$$
Let us denote by $w_\rho$ the number of prime ideals in $\mathcal{O}_{\rho}$ that divide $\omega_{\rho}.$ Then
\[
P(G) = \sum_{\rho \in X(G)} \rank\,G_1(\Lambda_{\rho})=\sum_{\rho \in X(G)} \big( r_{\rho}+  w_{\rho} \big).
\]

\section{A solvable counterexample}

In this section we show that $G=\SL(2, \mathds{F}_{3}),$ the group of $2 \times 2$ matrices with determinant 1 over the finite field $\mathds{F}_{3},$ is a counterexample to the Hambleton-Taylor-Williams Conjecture by comparing $R(G)$ and $P(G)$ that were described in the previous section. Note that the order of the group $G$ is 24.

For computation of $R(G)$ and $P(G)$ we need to know the table of complex irreducible characters of $G.$ If $\rho$ is an irreducible rational representation of $G$, then $E_{\rho}$  the center of the corresponding division algebra $D_{\rho}$ appearing in the Wedderburn decomposition is isomorphic to the field of character values $\mathds{Q}(\chi^{\mathds{C}}_{\rho}) \coloneqq \mathds{Q}(\chi^{\mathds{C}}_{\rho}(g)~|~g \in G),$ where $\chi^{\mathds{C}}_{\rho}$ is a character of any irreducible complex constituent of the complexification of the representation $\rho$ (see \cite[Theorem 3.3.1]{Je}).

Denote by $\xi$ a primitive cube root of unity. Complex irreducible characters of $\SL(2, \mathds{F}_3)$ are well-known (see \cite[p. 132]{Bo}) and are presented in Table 1.

\begin{table}[h!]
\caption {Character table of  $\SL(2, \mathds{F}_{3})$}
  \centering
  \begin{tabular}{l| c c c c c c c|| c }
repr.:&  $\big( \begin{smallmatrix} 1 & 0  \\ 0 & 1 \end{smallmatrix} \big)$ & $\big( \begin{smallmatrix} -1 & 0  \\ 0 & -1 \end{smallmatrix} \big)$ & $\big( \begin{smallmatrix} 0 & -1  \\ 1 & 0 \end{smallmatrix} \big)$ & $\big( \begin{smallmatrix} -1 & 1  \\ 0 & -1 \end{smallmatrix} \big)$& $\big( \begin{smallmatrix} 1 & -1  \\ 0 & 1 \end{smallmatrix} \big)$& $\big( \begin{smallmatrix} -1 & -1  \\ 0 & -1 \end{smallmatrix} \big)$ & $\big( \begin{smallmatrix} 1 & 1  \\ 0 & 1 \end{smallmatrix} \big)$ & $\mathds{Q}(\chi)$ \\
    size:&1 & 1 & 6 & 4 & 4 & 4 & 4&\\
    
    order:&1 & 2 & 4 & 6 & 3 & 6 & 3&\\
    \hline
    $\chi_1$ & 1 & 1 & 1& 1& 1& 1& 1 &$\mathds{Q}$\\
    $\chi_2$ & 3 & 3 & -1& 0& 0& 0& 0 &$\mathds{Q}$\\
    $\chi_3$ & 2 & -2 & 0& 1& -1& 1& -1 &$\mathds{Q}$\\
    $\chi_4$ & 1 & 1 & 1& $\xi$& $\xi$& $\xi^2$& $\xi^2$ &$\mathds{Q}(\xi)$\\
    $\chi_5$ & 1 & 1 & 1& $\xi^2$& $\xi^2$& $\xi$& $\xi$ &$\mathds{Q}(\xi)$\\
    $\chi_6$ & 2 & -2 & 0& $\xi$& -$\xi$& $\xi^2$& -$\xi^2$ &$\mathds{Q}(\xi)$\\
    $\chi_7$ & 2 & -2 & 0& $\xi^2$& -$\xi^2$& $\xi$& -$\xi$ &$\mathds{Q}(\xi)$\\
  \end{tabular}
\end{table}

The field $\mathds{Q}(\xi)$ is the splitting field for the group $G.$ The Galois group $\Gal(\mathds{Q}(\xi)/\mathds{Q})$ acts on the set of complex representations of $G$, in particular it permutes irreducible complex representations. Let $\chi$ be a character of irreducible rational representation of $G$. It is well known (see \cite[(74.5)]{CuRe2}) that $\chi$ can be realized as a sum of all distinct Galois conjugates of some complex irreducible character $\varphi$ taken with multiplicity $m(\varphi),$ which is the Schur index of $\varphi$, i.e. $\chi=m(\varphi) \sum_{\sigma} \varphi^{\sigma}.$ And vice versa any complex irreducible representation gives rise to a unique rational irreducible representation in the way described above.

The Galois group $\Gal(\mathds{Q}(\xi)/\mathds{Q})$ permutes characters $\chi_4$ with $\chi_5$, and $\chi_6$ with $\chi_7.$ The character $\chi_3$ is fixed by $\Gal(\mathds{Q}(\xi)/\mathds{Q})$, but the corresponding representation is not defined over $\mathds{Q}$ and has Schur index 2. We have 2 absolutely irreducible rational representations $\rho_1, \rho_2$ with characters $\chi_1, \chi_2,$ respectively; and 3 irreducible rational representations $\rho_3, \rho_4, \rho_5,$ s.t. $\chi_3$ is a character of one of the irreducible complex constituents of $\rho^{\mathds{C}}_3$, $\chi_4$ is a character of one of the irreducible complex constituents of $\rho^{\mathds{C}}_4$, and $\chi_6$ is a character of one of the irreducible complex constituents of $\rho_5^{\mathds{C}}$ (here $\rho^{\mathds{C}}$ denotes the complexification $\mathds{C} \otimes_{\mathds{Q}} \rho$ of a representation $\rho$).
 
The kernel of a representation $\rho$ coincides with the kernel of the induced character $\ker \chi_{\rho}=\{ g \in G~|~\chi_{\rho}(g)=\chi_{\rho}(e) \}$. Moreover, by the way how the rational irreducible representation is expressible in terms of complex irreducible representations the kernel of $\rho$ coincides with the kernel of $\chi^{\mathds{C}}_{\rho}.$ Therefore we have the following list of centers $E_{\rho}$'s and values of $\omega_{\rho}$'s given in Table 2.

\begin{table}[h!]  
\caption {}
  \centering
  \begin{tabular}{l|c c c c c }
 & $\rho_1$ & $\rho_2$ & $\rho_3$ & $\rho_4$ & $\rho_5$\\

      \hline
   $E_{\rho}$ & $\mathds{Q}$ & $\mathds{Q}$ & $\mathds{Q}$ & $\mathds{Q}(\xi)$ & $\mathds{Q}(\xi)$\\

  $\mathcal{O}_{\rho}$ & $\mathds{Z}$ & $\mathds{Z}$ & $\mathds{Z}$ & $\mathds{Z}[\xi]$ & $\mathds{Z}[\xi]$\\

  $\omega_{\rho}=\frac{24}{\ker \chi^{\mathds{C}}_{\rho} ~ \dim \chi^{\mathds{C}}_{\rho}}$ &1 &4 &12 &3 &12\\

  $v_{\rho}=\#\{$prime ideals in $\mathcal{O}_{\rho}$ that divide 24$\}$ &2 &2 &2 &2 &2\\

  $w_{\rho}=\# \{$prime ideals in  $\mathcal{O}_{\rho}$ that divide $\omega_{\rho}\}$ &0 &1 &2 &1 &2\\
  \end{tabular}
\end{table}

For computation of numbers  $v_{\rho}$ and $w_{\rho}$ in Table 2 we used the following theorem (for a more general statement see \cite[p. 47]{Ne}) to determine the number of prime ideals in $\mathds{Z}[\xi]$ that divide 2 and 3.

\begin{theorem} \label{prime} Let $\alpha$ be an algebraic integer such that $\mathds{Z}[\alpha]$ is integrally closed, and let $f$ be the minimal polynomial of $\alpha.$ Let $p$ be a prime number and let $$f(x)=\prod_{i} f_i(x)^{e_i}$$
in $\mathds{F}_p[x].$ Then the prime ideals that lie above $p$ in $\mathds{Z}[\alpha]$ are precisely the ideals $(p, f_i(\alpha)).$ 
\end{theorem}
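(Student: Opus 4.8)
The plan is to reduce the statement to an elementary analysis of the finite ring $\mathds{Z}[\alpha]/(p)$. First I would identify $\mathds{Z}[\alpha]$ with $\mathds{Z}[x]/(f)$: the evaluation homomorphism $\mathds{Z}[x]\to\mathds{Z}[\alpha]$, $x\mapsto\alpha$, is surjective, and since $f$ is monic, division with remainder by $f$ never leaves $\mathds{Z}[x]$, so every $g\in\mathds{Z}[x]$ with $g(\alpha)=0$ is divisible by $f$ in $\mathds{Z}[x]$; hence the kernel is exactly $(f)$. Reducing modulo $p$ then gives ring isomorphisms
$$\mathds{Z}[\alpha]/(p)\;\cong\;\mathds{Z}[x]/(p,f)\;\cong\;\mathds{F}_p[x]/(\bar f),$$
where $\bar f$ denotes the reduction of $f$, which by hypothesis factors as $\bar f=\prod_i f_i^{e_i}$ with the $f_i$ distinct monic irreducibles of $\mathds{F}_p[x]$.

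Next I would locate the maximal ideals of $\mathds{F}_p[x]/(\bar f)$. These correspond to the maximal ideals of $\mathds{F}_p[x]$ containing $\bar f$; since $\mathds{F}_p[x]$ is a principal ideal domain whose maximal ideals are generated by monic irreducible polynomials, these are precisely the ideals $(f_i)$, one for each distinct irreducible factor, and the multiplicities $e_i$ are irrelevant to this list. Tracing this correspondence back through the isomorphisms above --- using that $\bar f\in(f_i)$, so that the preimage of $(f_i)$ under $\mathds{Z}[x]\to\mathds{F}_p[x]/(\bar f)$ is $(p,\tilde f_i)$ for a monic integral lift $\tilde f_i$ of $f_i$, and then pushing forward along $x\mapsto\alpha$ --- one finds that the maximal ideals of $\mathds{Z}[\alpha]$ containing $p$ are exactly $\mathfrak{p}_i\coloneqq(p,\tilde f_i(\alpha))$, with distinct indices giving distinct ideals.

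It remains to note that ``prime lying above $p$'' and ``maximal ideal containing $p$'' coincide here. Since $\alpha$ is an algebraic integer, $\mathds{Z}[\alpha]$ is integral over $\mathds{Z}$; being in addition integrally closed with fraction field the number field $\mathds{Q}(\alpha)$, it equals the full ring of integers $\mathcal{O}_{\mathds{Q}(\alpha)}$, a Dedekind domain of Krull dimension $1$. Hence every nonzero prime is maximal, and a prime lies over $p$ precisely when it contains $p$ (and such a prime is automatically nonzero since $p\neq0$). Combining this with the previous paragraph gives the asserted description.

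I do not expect a genuine obstacle: the argument merely repackages the factorization of $\bar f$ in the PID $\mathds{F}_p[x]$ and is one of the standard forms of the Kummer--Dedekind theorem. The two points that warrant a sentence of care are that the identification $\mathds{Z}[\alpha]\cong\mathds{Z}[x]/(f)$ really uses monicity of $f$ (so that the division algorithm stays integral), and that the exponents $e_i$ play no role in the \emph{set} of primes above $p$ --- they record ramification, i.e. the refinement $(p)=\prod_i\mathfrak{p}_i^{e_i}$, which the present statement does not claim.
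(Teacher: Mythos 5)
Your argument is correct, and it is the standard Kummer--Dedekind proof: identify $\mathds{Z}[\alpha]\cong\mathds{Z}[x]/(f)$ using monicity of $f$, reduce mod $p$ to get $\mathds{Z}[\alpha]/(p)\cong\mathds{F}_p[x]/(\bar f)$, and read off the maximal ideals from the distinct irreducible factors $f_i$. The paper does not prove this statement at all --- it is quoted with a reference to Neukirch \cite[p.~47]{Ne} --- so your write-up simply supplies the standard argument behind the citation; your closing remarks (monicity is what keeps the division integral, and the exponents $e_i$ only matter for ramification, not for the list of primes) are exactly the right points of care. One small observation: the hypothesis that $\mathds{Z}[\alpha]$ be integrally closed is not really needed for the set-theoretic statement you prove, since the primes of $\mathds{Z}[\alpha]$ lying over $p$ are in any case the primes containing $p$, and these correspond to the maximal ideals of the finite ring $\mathds{Z}[\alpha]/(p)$; you use integral closedness only to invoke the Dedekind property, which is harmless but slightly more than required.
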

For $p=3$ the minimal polynomial of $\xi$ factors as $x^2+x+1=(x-1)^2$ in $\mathds{F}_3[x]$ and hence by Theorem~\ref{prime} there is only one prime ideal above 3 in $\mathds{Z}[\xi].$ For $p=2$ the polynomial $x^2+x+1$ is irreducible in $\mathds{F}_2[x]$ and hence there is exactly one prime ideal above 2 in $\mathds{Z}[\xi].$

The last step is to determine $\varepsilon$ appearing in the description of $R(G)$. For this we need to compute the number of $p$-regular $\mathds{F}_p$-conjugacy classes for $p=2,3.$

For $p=3$ there are 3 conjugacy classes of 3-regular elements, namely the classes with representatives $\big( \begin{smallmatrix} 1 & 0  \\ 0 & 1 \end{smallmatrix} \big),\big( \begin{smallmatrix} -1 & 0  \\ 0 & -1 \end{smallmatrix} \big)$ and $\big( \begin{smallmatrix} 0 & -1  \\ 1 & 0 \end{smallmatrix} \big).$ Since first two elements belong to the center of $G$ and $\big( \begin{smallmatrix} 0 & -1  \\ 1 & 0 \end{smallmatrix} \big)^3=\big( \begin{smallmatrix} 0 & -1  \\ 1 & 0 \end{smallmatrix} \big),$ these elements belong to 3 different $\mathds{F}_3$-conjugacy classes.

For $p=2$ there are 3 conjugacy classes of 2-regular elements, namely the classes with representatives $\big( \begin{smallmatrix} 1 & 0  \\ 0 & 1 \end{smallmatrix} \big),\big( \begin{smallmatrix} 1 & 1  \\ 0 & 1 \end{smallmatrix} \big)$ and $\big( \begin{smallmatrix} 1 & -1  \\ 0 & 1 \end{smallmatrix} \big).$ Since $\big( \begin{smallmatrix} 1 & 1  \\ 0 & 1 \end{smallmatrix} \big)^2=\big( \begin{smallmatrix} 1 & -1  \\ 0 & 1 \end{smallmatrix} \big)$ the elements $\big( \begin{smallmatrix} 1 & 1  \\ 0 & 1 \end{smallmatrix} \big)$ and $\big( \begin{smallmatrix} 1 & -1  \\ 0 & 1 \end{smallmatrix} \big)$  are $\mathds{F}_2$-conjugated. Therefore there are two 2-regular $\mathds{F}_2$-conjugacy classes. Finally

$$P(G)-R(G)=\sum_{\rho}(r_{\rho} + w_{\rho}) -\sum_{\rho} (r_{\rho} + v_{\rho}) + \varepsilon=\sum_{\rho} w_{\rho}-\sum_{\rho} v_{\rho}+ \varepsilon=6-10+5=1.$$

This shows that the actual rank of $G_1(\mathds{Z}G)$ and the rank predicted by the HTW-conjecture do not coincide and $G=\SL(2, \mathds{F}_3)$ is a solvable counterexample to the conjectured formula. Thus Theorem~\ref{counter} is proved.

\section{Counting modular representations in terms of rational}

In this section we prove an inequality estimating the number of modular irreducible representations of a finite group $G$ in terms of rational irreducible representations of $G.$ The result that we obtain implies that $P(G) \geq R(G)$ for any finite group $G.$  The proof of the inequality provides an explanation why in general the HTW-Conjecture does not hold for $G_1(\mathds{Z}G)$. We are using the same notation as before. 

Let $p$ be a prime number that divides the order of $G$. Recall that for any rational irreducible representation $\rho$ of $G$ there is an associated algebraic number field $E_{\rho} \colon$ the field of character values $\mathds{Q}(\chi^{\mathds{C}}_{\rho}),$ where $\chi^{\mathds{C}}_{\rho}$ is a character of any of the irreducible complex constituents of $\mathds{C} \otimes_{\mathds{Q}} \rho$. Let $t_{\rho}$ be the number of different prime ideals in $\mathcal{O}_{\rho}$ that divide the principal ideal $(p).$ Then the following inequality holds.

\begin{customthm}{B}
Let $G$ be any finite group and let $p$ be a prime integer that divides the order of $G$. Then 
\[\# \{ \text{irreducible $\mathds{F}_{p}$-representations of $G$} \} \geq \sum_{\rho \in I_p} t_{\rho},\]
where $I_p$ is the set of all rational irreducible representations $\rho$ of $G$ for which the corresponding number $\omega_{\rho}$ is not divisible by $p$.
\end{customthm}

The main ingredient in the proof of Theorem~\ref{Inequality} is the following classical theorem due to Brauer and Nesbitt (see \cite{BrNe}, \cite{CuRe}).
\begin{theorem}[Brauer-Nesbitt]
Let $G$ be a finite group of order $|G|=p^a m$, where $(p,m)=1$. If a complex irreducible representation $\phi$ has dimension divisible by $p^a,$ then it remains irreducible after reduction mod p. Moreover, the character of $\phi$ vanishes on all elements of $G$, whose order is divisible by $p,$ and coincides with Brauer character of $\bar{\phi}$ (the reduction mod $p$ of $\phi$) on $p$-regular elements of $G$.
\end{theorem}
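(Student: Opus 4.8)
The plan is to attach to each $\rho \in I_p$ a family of exactly $t_\rho$ distinct irreducible $\mathds{F}_p$-representations, obtained by reducing the complex constituents of $\rho$ modulo $p$, and then to prove that the families coming from distinct $\rho \in I_p$ are pairwise disjoint; summing the sizes over $I_p$ then yields the inequality. First I would verify that the Brauer--Nesbitt theorem applies to each constituent. Fix $\rho \in I_p$, let $\phi$ be an irreducible complex constituent of $\mathds{C}\otimes_{\mathds{Q}}\rho$ with character $\chi^{\mathds{C}}_\rho$, and set $\bar G = G/\ker\rho$, so that $\phi$ is a faithful irreducible representation of $\bar G$ of dimension $d_\rho$. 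The hypothesis $p \nmid \omega_\rho$ reads $v_p(|G|) = v_p(k_\rho) + v_p(d_\rho)$, i.e.\ $v_p(|\bar G|) = v_p(d_\rho)$; since $d_\rho$ divides $|\bar G|$, this forces the full $p$-part of $|\bar G|$ to divide $d_\rho$, which is precisely the Brauer--Nesbitt hypothesis. Hence $\phi$ reduces to an absolutely irreducible modular representation $\bar\phi$, the character $\chi^{\mathds{C}}_\rho$ vanishes on every element of $\bar G$ of order divisible by $p$, and $\bar\phi$ agrees with $\phi$ on $p$-regular classes through its Brauer character.

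Next I would determine the kernel of the reduction, which is where the genuine input lies. Because $d_\rho$ is divisible by the full $p$-part of $|\bar G|$, the simple module $\bar\phi$ lies in a block of defect zero and is therefore a projective $\overbar{\mathds{F}}_p\bar G$-module. Restricting a projective module to a Sylow $p$-subgroup $\bar Q \le \bar G$ yields a free $\overbar{\mathds{F}}_p\bar Q$-module, i.e.\ a multiple of the regular module, which is faithful; thus $\ker\bar\phi \cap \bar Q = 1$. On the other hand $\ker\bar\phi$ meets the $p$-regular elements trivially, since on those $\bar\phi$ is semisimple and $\phi$ is faithful, so $\ker\bar\phi$ is a normal $p$-subgroup of $\bar G$ and hence contained in every Sylow $p$-subgroup. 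Combining the two facts gives $\ker\bar\phi = 1$, so $\bar\phi$ is faithful on $\bar G$ and, viewed as a representation of $G$, has kernel exactly $\ker\rho$.

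I would then count the contribution of a single $\rho$. The $m_\rho = [E_\rho:\mathds{Q}]$ Galois conjugates $\phi^\sigma$, for $\sigma \in \Gal(E_\rho/\mathds{Q})$, reduce to $m_\rho$ pairwise non-isomorphic absolutely irreducible modular representations, because their Brauer characters are the distinct restrictions of $\sigma\chi^{\mathds{C}}_\rho$ to the $p$-regular classes. The $p$-power map of $\overbar{\mathds{F}}_p$ twists a modular representation by the arithmetic Frobenius $\sigma_p\colon \zeta \mapsto \zeta^p$ acting on roots of unity of order prime to $p$. Since $\chi^{\mathds{C}}_\rho$ is supported on $p$-regular classes we have $E_\rho \subseteq \mathds{Q}(\zeta_d)$ with $d$ prime to $p$, so $p$ is unramified in $E_\rho$, the element $\sigma_p|_{E_\rho}$ is its Frobenius of order equal to the residue degree $f_p$, and $t_\rho = m_\rho/f_p$. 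The Frobenius therefore permutes the $m_\rho$ reductions by $\sigma \mapsto \sigma_p\sigma$, producing $m_\rho/f_p = t_\rho$ orbits; each orbit is the set of absolutely irreducible constituents of a single irreducible $\mathds{F}_p$-representation, so $\rho$ contributes exactly $t_\rho$ of them.

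Finally I would prove disjointness across $I_p$, the main obstacle and the only place the kernel computation is essential. Suppose an irreducible $\mathds{F}_p$-representation arose from both $\rho$ and $\rho'$, with constituents $\phi$ and $\phi'$; then $\bar{\phi'} \cong \bar\phi^\sigma$ for some power of Frobenius, so after replacing $\phi$ by a Galois conjugate we may assume $\bar\phi \cong \bar{\phi'}$ over $\overbar{\mathds{F}}_p$. The kernel computation gives $\ker\rho = \ker\bar\phi = \ker\bar{\phi'} = \ker\rho'$, so $\phi$ and $\phi'$ live on the same quotient $\bar G$. On $p$-regular classes their ordinary characters coincide (equal Brauer characters), and on any element whose image in $\bar G$ has order divisible by $p$ both vanish by Brauer--Nesbitt; writing a general element as the product of its commuting $p$-regular and $p$-singular parts and using the common kernel, these two cases cover all of $G$, so $\chi^{\mathds{C}}_\rho = \chi^{\mathds{C}}_{\rho'}$ identically. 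Hence $\phi$ and $\phi'$ are Galois conjugate and $\rho = \rho'$. The families are thus disjoint, the reductions attached to $I_p$ furnish $\sum_{\rho \in I_p} t_\rho$ distinct irreducible $\mathds{F}_p$-representations of $G$, and the asserted inequality follows; without the faithfulness step two different rational representations could a priori reduce to Frobenius-conjugate modular representations and destroy the count.
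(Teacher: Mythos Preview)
Your proposal does not prove the stated theorem. The statement you were given is the classical Brauer--Nesbitt theorem, which the paper quotes without proof (citing \cite{BrNe}, \cite{CuRe}) and uses as a black box. Your write-up is instead an argument for Theorem~B, the inequality $\#\{\text{irreducible }\mathds{F}_p\text{-representations of }G\}\ge \sum_{\rho\in I_p}t_\rho$, and you explicitly \emph{invoke} Brauer--Nesbitt rather than establish it (``First I would verify that the Brauer--Nesbitt theorem applies\dots''). As a proof of the Brauer--Nesbitt statement itself your proposal is therefore simply off-target: nothing in it addresses why a complex irreducible of dimension divisible by $p^a$ stays irreducible upon reduction, or why its character vanishes on $p$-singular elements.

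If your intention was Theorem~B, your outline is correct and runs parallel to, but is not identical with, the paper's proof. The paper packages the injectivity of reduction mod~$p$ on $p$-special representations into a separate proposition, proved via a character identity (their Lemma on two characters $\varphi_1,\varphi_2$ with the same $p$-regular restriction and vanishing on elements that are $p$-singular modulo the respective kernels). You replace this by a direct faithfulness argument for $\bar\phi$ on $G/\ker\rho$ using that a defect-zero simple is projective, hence free on a Sylow $p$-subgroup; this yields $\ker\bar\phi=\ker\rho$ and makes the disjointness step transparent. A second difference: you observe that, since $\chi^{\mathds{C}}_\rho$ is supported on $p$-regular classes of $G/\ker\rho$, one has $E_\rho\subseteq\mathds{Q}(\zeta_d)$ with $(d,p)=1$, so $p$ is unramified in $E_\rho$ and the Frobenius orbit size is exactly the residue degree $f_p$; this gives \emph{exactly} $t_\rho$ irreducible $\mathds{F}_p$-representations per $\rho\in I_p$, whereas the paper only bounds the number of $\Gal(\bar K/\mathds{F}_p)$-orbits from below by $t_\rho\,|\mathfrak{I}(\mathfrak{q}/p)|\ge t_\rho$. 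Both approaches suffice for the inequality; yours is a bit sharper and avoids the auxiliary lemmas, at the cost of importing the defect-zero/projectivity fact.
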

We start with some preparatory statements and definitions before giving the proof of Theorem~\ref{Inequality}. 
It is well known (see \cite[p. 284]{CuRe}) that any element $g \in G$ is expressible as $g=g_{p'} g_{p},$ where $g_{p'}$ and $g_{p}$ commute, $g_{p'}$ has order coprime with $p$, and $g_{p}$ has order a power of $p$. The elements $g_{p'}, g_{p}$ are uniquely determined and are called $p$-regular and $p$-singular components of $g$, respectively.

\begin{lemma} \label{quotient-order}
Let $H$ be a normal subgroup in $G$, and let $g=g_{p'} g_{p} \in G.$  If $g_{p} \not \in H,$ then $\ord_{G/H}(\bar{g})$ is divisible by $p$, where $\ord_{G/H}(\bar{g})$ is the order of $g$ considered as an element in the quotient group $G/H.$
\end{lemma}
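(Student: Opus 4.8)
The plan is to pass to the quotient $G/H$ and analyse the $p$-regular and $p$-singular parts of the image of $g$ separately. Write $\bar x$ for the image in $G/H$ of an element $x\in G$. The quotient homomorphism carries the factorisation $g=g_{p'}g_{p}$ to $\bar g=\bar g_{p'}\bar g_{p}$, and being a homomorphism it keeps $\bar g_{p'}$ and $\bar g_{p}$ commuting. Moreover $\ord_{G/H}(\bar g_{p'})$ divides $\ord(g_{p'})$, hence is coprime to $p$, and $\ord_{G/H}(\bar g_{p})$ divides $\ord(g_{p})$, hence is a power of $p$. This need not be the canonical $p$-regular/$p$-singular decomposition of $\bar g$ (one of the two factors may become trivial in $G/H$), but only these two properties — commutativity and the divisibility of the orders — will be used.

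Next I would reformulate the hypothesis: $g_{p}\notin H$ is precisely the statement that $\bar g_{p}\neq 1$ in $G/H$, so $\ord_{G/H}(\bar g_{p})=p^{k}$ for some integer $k\ge 1$. The lemma will follow once we know that $\ord_{G/H}(\bar g_{p})$ divides $\ord_{G/H}(\bar g)$, for then $p^{k}\mid \ord_{G/H}(\bar g)$, and in particular $p\mid \ord_{G/H}(\bar g)$.

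For this last divisibility I would invoke the elementary fact that two commuting elements of coprime order generate a cyclic group whose order is the product of the two orders; concretely, setting $n=\ord_{G/H}(\bar g)$, the relation $1=\bar g^{\,n}=\bar g_{p'}^{\,n}\bar g_{p}^{\,n}$ gives $\bar g_{p}^{\,n}=\bar g_{p'}^{\,-n}$, an element whose order is simultaneously a power of $p$ (it divides $\ord_{G/H}(\bar g_{p})$) and coprime to $p$ (it divides $\ord_{G/H}(\bar g_{p'})$), hence trivial. Thus $\ord_{G/H}(\bar g_{p})\mid n$, as needed.

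I do not anticipate a genuine obstacle: everything reduces to the functoriality of the quotient map and the standard coprime-order lemma for commuting elements. The only point requiring a moment's care is to record explicitly that reduction modulo $H$ can only shrink the orders of $g_{p'}$ and $g_{p}$ — so that coprimality of the orders of the two commuting factors is preserved — and that it does not disturb their commutativity.
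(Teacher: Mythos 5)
Your argument is correct and coincides with the paper's own proof: both reduce modulo $H$, use the relation $\bar g_{p}^{\,n}=\bar g_{p'}^{\,-n}$ for $n=\ord_{G/H}(\bar g)$, observe that this element has order simultaneously a power of $p$ and coprime to $p$ and is therefore trivial, and conclude from $\bar g_{p}\neq e$ that $p$ divides $n$. No discrepancies to report.
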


\begin{proof}
For an element $x \in G$ we denote by $\bar{x}$ the image of $x$ in the quotient group $G/H$. Let $a=\ord_{G/H}(\bar{g})$. Since elements $g_{p'}$ and $g_{p}$ commute in $G$ we have $\bar{g}^a=\overbar{g_{p'}}^a \overbar{g_{p}}^a=e.$ Hence $(\overbar{g_{p'}}^{-1})^a= \overbar{g_{p}}^a.$ The order of the element $\overbar{g_{p}}^a$ in $G/H$ divides the order of $g^a_{p}$ in $G,$ and therefore $\ord_{G/H}(\overbar{g_{p}}^a)$ is a power of $p.$ The same way the order of $(\overbar{g_{p'}}^{-1})^a$ in $G/H$ is coprime with $p.$ This implies that $(\overbar{g_{p'}}^{-1})^a= \overbar{g_{p}}^a=e$ and hence $a$ is divisible by $\ord_{G/H}(\overbar{g_{p}}).$ At the same time $\overbar{g_{p}} \neq e$ and hence $\ord_{G/H}(\overbar{g_{p}})$ is divisible by $p.$ Therefore $a$ is divisible by $p.$  
\end{proof}

\begin{lemma}\label{character-criterion}
Let $\varphi_1, \varphi_2$ be complex irreducible characters of $G.$ Let $H_1=\ker(\varphi_1)$ and $H_2=\ker(\varphi_2)$. Suppose the following conditions are satisfied.
\begin{enumerate}
\item $\varphi_i(g)=0$ for all $g \in G,$ s.t. $\ord_{G/H_i}(\bar{g})$ is divisible by $p$, $i \in \{1,2\}$;
\item $\varphi_1(g)=\varphi_2(g)$ for all $g \in G,$ s.t. $\ord_G(g)$ is not divisible by $p$.
\end{enumerate}
Then $\varphi_1$ and $\varphi_2$ are equal.
\end{lemma}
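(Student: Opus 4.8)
The plan is to avoid comparing $\varphi_1$ and $\varphi_2$ value by value (which is awkward on $p$-singular elements whose $p$-part lies in one kernel but not the other) and instead to compute the Hermitian inner product $\langle\varphi_1,\varphi_2\rangle_G=\frac{1}{|G|}\sum_{g\in G}\varphi_1(g)\overline{\varphi_2(g)}$ and show it is strictly positive. Since $\varphi_1,\varphi_2$ are irreducible complex characters, orthonormality of irreducible characters gives $\langle\varphi_1,\varphi_2\rangle_G\in\{0,1\}$, so a positive value forces $\langle\varphi_1,\varphi_2\rangle_G=1$ and hence $\varphi_1=\varphi_2$.

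First I would fix complex representations $\rho_1,\rho_2$ affording $\varphi_1,\varphi_2$ and recall the standard fact that $H_i=\ker\varphi_i=\ker\rho_i$; in particular $H_i\trianglelefteq G$, so Lemma~\ref{quotient-order} applies with $H=H_i$, and if $g_p\in H_i$ then $\rho_i(g)=\rho_i(g_{p'})\rho_i(g_p)=\rho_i(g_{p'})$, whence $\varphi_i(g)=\varphi_i(g_{p'})$. Then I would split the sum defining $\langle\varphi_1,\varphi_2\rangle_G$ according to whether $g$ is $p$-regular or $p$-singular, writing $g=g_{p'}g_p$ in the $p$-singular case.

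For $p$-regular $g$, hypothesis (2) gives $\varphi_1(g)=\varphi_2(g)$, so the corresponding summand equals $|\varphi_1(g)|^2\ge 0$, and the $g=e$ summand equals $\varphi_1(e)^2>0$. For $p$-singular $g=g_{p'}g_p$ (so $g_p\neq e$) I would distinguish three exhaustive cases. If $g_p\notin H_1$, then by Lemma~\ref{quotient-order} the order of $\bar g$ in $G/H_1$ is divisible by $p$, so hypothesis (1) forces $\varphi_1(g)=0$ and the summand $\varphi_1(g)\overline{\varphi_2(g)}$ vanishes; the case $g_p\notin H_2$ is symmetric. If $g_p\in H_1\cap H_2$, then $\varphi_i(g)=\varphi_i(g_{p'})$ for $i=1,2$ by the remark above, and since $g_{p'}$ is $p$-regular, hypothesis (2) gives $\varphi_1(g_{p'})=\varphi_2(g_{p'})$, so the summand equals $|\varphi_1(g_{p'})|^2\ge 0$. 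Thus every summand is nonnegative while the $g=e$ summand is positive, so $\langle\varphi_1,\varphi_2\rangle_G\ge \varphi_1(e)^2/|G|>0$, and the conclusion follows as indicated above.

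The one delicate point, and the reason for passing to the inner product, is the case $g_p\in H_1\setminus H_2$ (and its mirror image): there one has $\varphi_1(g)=\varphi_1(g_{p'})$ and $\varphi_2(g)=0$, and a pointwise argument would need $\varphi_1(g_{p'})=0$, which does not follow from hypotheses (1) and (2) taken in isolation. In the inner product this obstruction disappears, because the product $\varphi_1(g)\overline{\varphi_2(g)}$ is already $0$ once $\varphi_2(g)=0$, while every genuinely surviving term is a square of an absolute value and hence nonnegative. I expect the only place needing real care to be setting up the three-case split cleanly and checking that it is exhaustive.
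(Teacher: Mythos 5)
Your proposal is correct and follows essentially the same route as the paper: both arguments compute $\langle\varphi_1,\varphi_2\rangle$, observe via Lemma~\ref{quotient-order} and hypothesis (1) that every summand with $g_p\notin H_1\cap H_2$ vanishes while the remaining summands are $|\varphi_1(g_{p'})|^2\ge 0$ by hypothesis (2), and conclude from the positive $g=e$ term and orthogonality of irreducible characters that $\varphi_1=\varphi_2$. Your closing remark about why a pointwise comparison fails is exactly the point the paper's inner-product argument is designed to sidestep.
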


\begin{proof}
Any element $g \in G$ is expressible in the form $g=g_{p'} g_{p},$ where $g_{p'}$ is $p$-regular and $g_{p}$ is $p$-singular. If $g_{p} \not \in H_1$, then by Lemma~\ref{quotient-order} we have that  $\ord_{G/H_1}(\bar{g})$ is divisible by $p$, and therefore by the first condition $\varphi_1(g)=0.$ Analogously, if $g_{p} \not \in H_2$, then $\varphi_2(g)=0.$ Hence we have the following equalities 

\begin{equation}
    \varphi_1(g)=\varphi_1(g_{p'} g_{p})=
    \begin{cases}
      0, & \text{if}\ g_{p} \not \in H_1\\
      \varphi_1(g_{p'})=\varphi_2(g_{p'}), & \text{if}\ g_{p} \in H_1
    \end{cases}
  \end{equation} 
  and
  \begin{equation}
    \varphi_2(g)=\varphi_2(g_{p'} g_{p})=
    \begin{cases}
      0, & \text{if}\ g_{p} \not \in H_2\\
      \varphi_2(g_{p'})=\varphi_1(g_{p'}), & \text{if}\ g_{p} \in H_2.
    \end{cases}
  \end{equation} 
  Let us consider the inner product of the characters $\varphi_1, \varphi_2$
  
  \begin{equation} \langle \varphi_1, \varphi_2 \rangle=\frac{1}{|G|} \sum_{g \in G} \varphi_1(g)\overline{\varphi_2(g)}=\frac{1}{|G|} \sum_{\substack{g=g_{p'}g_p \in G,\\           
   g_{p} \in {H_1 \cap H_2}}} \varphi_1(g_{p'}) \overline{\varphi_1(g_{p'})},
  \end{equation}
where the last equality holds since all terms with $g_{p} \not \in {H_1 \cap H_2}$ vanish. The value of the sum is a positive real number, because each term is a non-negative real number and taking $g$ to be the identity element $e$ gives a non-zero summand $|\varphi_1(e)|^2$. This implies that the inner product of irreducible characters $\varphi_1, \varphi_2$ is non-zero, and  therefore by the orthogonality theorem $\varphi_1=\varphi_2.$  
\end{proof}

Let $K$ be an algebraic number field that is a splitting field for $G$ and is a Galois extension of $\mathds{Q},$ e.g. by Brauer's Realization Theorem we may take $K=\mathds{Q}(\xi_{|G|}),$ where $\xi_{|G|}$ is a primitive $|G|$-th root of unity. This means that any complex representation of $G$ is realized over $K.$ Let $\mathcal{O}_K$ denote the ring of algebraic integers of $K$ and let $\mathfrak{p}$ be a prime ideal in $\mathcal{O}_K$ containing $p.$ Denote the field $\mathcal{O}_K / \mathfrak{p}$ by $\bar{K},$ it is clearly a finite Galois extension of $\mathds{F}_p.$ Recall that with any representation $\phi$ over $K$ we can associate a representation $\bar{\phi}$ over $\bar{K}$, whose composition factors are uniquely determined (see e.g. \cite[\S 82]{CuRe}). We refer to this process as reduction mod $p$. Let $d \colon G_0(KG) \to G_0(\bar{K}G)$ be the map induced by the reduction mod $p.$

\begin{definition}
Let $\phi$ be an irreducible complex representation of $G.$ We call $\phi$ $p$-special if the corresponding number $\omega_{\phi}=\frac{|G/ \ker \phi|}{\dim \phi}$ is not divisible by $p.$
\end{definition}
The next lemma shows that reduction mod $p$ is injective on the set of $p$-special representations. Whenever we mention the set of representations, we mean the set of isomorphism classes of representations.

\begin{proposition}\label{Injectivity}
Let $G$ be a finite group, $p$ be a prime number that divides $|G|,$ and let $K, \bar{K}$ be defined as above. Then the following holds.
\begin{enumerate}
\item[(i)] The reduction mod $p$ of a $p$-special representation of $G$ is irreducible $\bar{K}$-representation of $G$.
\item[(ii)] The restriction of the map $d \colon G_0(KG) \to G_0(\bar{K}G)$ to the set of $p$-special representations of $G$ is injective.
\end{enumerate}
\end{proposition}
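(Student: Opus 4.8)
The plan is to deduce both parts from the Brauer--Nesbitt theorem, but to apply it not to $G$ itself but to each quotient $G/\ker\phi$, since the integer $\omega_\phi$ governing $p$-specialness is built from $|G/\ker\phi|$ rather than from $|G|$. For part (i) I would take a $p$-special irreducible $K$-representation $\phi$, set $H=\ker\phi$, and view $\phi$ as a faithful irreducible representation of $\bar G\coloneqq G/H$, which $K$ still splits. Writing $|\bar G|=p^{b}m$ with $(p,m)=1$, the hypothesis $p\nmid\omega_\phi=|\bar G|/\dim\phi$ forces $p^{b}\mid\dim\phi$, so Brauer--Nesbitt applies to $\bar G$ and shows that the reduction mod $p$ of $\phi$ is a simple $\bar K\bar G$-module. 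Since $\bar\phi$ as a $\bar K G$-module is the inflation of this module along $G\twoheadrightarrow\bar G$, and inflation sends simple modules to simple modules, $\bar\phi$ is an irreducible $\bar K G$-representation, which establishes (i).

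For part (ii), what has to be shown is that $\phi\mapsto[\bar\phi]$ is injective on isomorphism classes of $p$-special irreducibles. I would start from two $p$-special representations $\phi_1,\phi_2$ with $d([\phi_1])=d([\phi_2])$ in $G_0(\bar K G)$. By (i) both $[\bar\phi_1]$ and $[\bar\phi_2]$ are classes of \emph{irreducible} $\bar K G$-modules, and $G_0(\bar K G)$ is free abelian on the simple $\bar K G$-modules, so $\bar\phi_1\cong\bar\phi_2$; in particular $\bar\phi_1$ and $\bar\phi_2$ have the same Brauer character, i.e.\ every $p$-regular element of $G$ acts on them with the same eigenvalues. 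Next I would reintroduce the ordinary characters $\varphi_i$ of $\phi_i$ and the kernels $H_i=\ker\phi_i$, and apply Brauer--Nesbitt to $\phi_i$ regarded as a representation of $G/H_i$ (legitimate since $p^{b_i}\mid\dim\phi_i$, as in (i)). This yields two things: $\varphi_i(g)=0$ whenever $\ord_{G/H_i}(\bar g)$ is divisible by $p$, and $\varphi_i(g)$ equals the value at $g$ of the Brauer character of $\bar\phi_i$ for every $p$-regular $g\in G$ — using here that inflation along $G\twoheadrightarrow G/H_i$ leaves the eigenvalues of a $p$-regular element unchanged, so the two Brauer characters (over $G$ and over $G/H_i$) agree on $p$-regular elements. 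Combining the second property for $i=1,2$ with $\bar\phi_1\cong\bar\phi_2$ gives $\varphi_1(g)=\varphi_2(g)$ for all $g\in G$ of order prime to $p$; together with the first property, $\varphi_1$ and $\varphi_2$ satisfy both hypotheses of Lemma~\ref{character-criterion}, hence $\varphi_1=\varphi_2$ and $\phi_1\cong\phi_2$.

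The step I expect to be the main obstacle is the careful bookkeeping in the passage to the quotients $G/\ker\phi$: one must check that Brauer--Nesbitt is invoked only in the range where its degree hypothesis is satisfied — which is exactly the content of $p$-specialness — and, more delicately, that the two conclusions of Brauer--Nesbitt obtained at the level of $G/\ker\phi$ (vanishing on $p$-singular elements, agreement with the Brauer character on $p$-regular elements) are repackaged into precisely the two hypotheses of Lemma~\ref{character-criterion} at the level of $G$. The vanishing condition transfers at once from $\varphi_i(g)=\varphi_i(\bar g)$, whereas the Brauer-character condition requires the elementary but essential observation that inflation preserves the eigenvalues, hence the Brauer-character values, of every $p$-regular element.
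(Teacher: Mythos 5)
Your proposal is correct and follows essentially the same route as the paper: part (i) applies Brauer--Nesbitt to $\phi$ viewed as a representation of $G/\ker\phi$ (where $p$-specialness supplies the degree hypothesis) and inflates back, and part (ii) combines the two conclusions of Brauer--Nesbitt on each quotient $G/\ker\phi_i$ with Lemma~\ref{character-criterion} to force $\varphi_1=\varphi_2$. Your extra bookkeeping (freeness of $G_0(\bar K G)$ on simples, inflation preserving Brauer-character values on $p$-regular elements) only makes explicit steps the paper leaves implicit.
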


\begin{proof} (i) Let $\phi$ be a $p$-special representation of $G$. Denote by $H$ the kernel of $\phi.$ Then $\phi$ induces a representation of $G/H,$ which we denote by $\phi_{G/H}.$ Since $\phi$ is irreducible, the representation $\phi_{G/H}$ is irreducible as well. Now the Brauer-Nesbitt Theorem can be applied to $\phi_{G/H},$ because $\frac{|G/H|} {\dim \phi_{G/H}}$ is not divisible by $p$ by the definition of a $p$-special representation. Therefore $\bar{\phi}_{G/H}$ is an irreducible $\bar{K}$-representation of $G/H$, which implies that $\bar{\phi}$ is an irreducible  $\bar{K}$-representation of $G.$ 

(ii) The Brauer-Nesbitt Theorem guarantees that the character of $\phi_{G/H}$ vanishes on all elements of $G/H$ having order divisible by $p,$ and coincides with Brauer character of $\bar{\phi}_{G/H}$ on $p$-regular elements of $G/H.$ Note that $p$-regular elements of $G$ remain $p$-regular when passing to the quotient group $G/H.$

Suppose that the reduction mod $p$ of two $p$-special representations $\phi_1$ and $\phi_2$ gives $\bar{K}$-equivalent representations $\bar{\phi}_1, \bar{\phi}_2.$ Then the Brauer characters of $\bar{\phi}_1$ and $\bar{\phi}_2$ are the same, which implies by Brauer-Nesbitt Theorem that ordinary characters of $\phi_1$ and $\phi_2$ coincide on $p$-regular elements of $G.$ This means that conditions of Lemma \ref{character-criterion} are satisfied for the characters of $\phi_1$ and $\phi_2,$ hence representations $\phi_1$ and $\phi_2$ are $K$-equivalent. Therefore the restriction of the map $d$ to the set of $p$-special representations of $G$ is injective. 
\end{proof}

To prove Theorem~\ref{Inequality} we will also use the following theorem, that describes the behavior of irreducible representations under extension of the base field (see \cite[(74.5)]{CuRe2}).

\begin{theorem}\label{orbit}
Let $G$ be a finite group, $k$ arbitrary field, and let $E$ be a splitting field for $G,$ s.t. $E$ is a finite Galois extension of $k.$ Then we have
\begin{itemize}
\item[(i)] For a simple $kG$-module $U$ there is an isomorphism of $EG$-modules 
\[ E \otimes_{k} U \cong  (V_1 \oplus \ldots \oplus V_t)^{\oplus m},
\]
where $\{V_1, \ldots, V_t\}$ is a set of non-isomorphic simple left $EG$-modules permuted transitively by the Galois group $\Gal(E/k),$ and $m=m_k(V_i)$ is the Schur index.
\item[(ii)] Let $\varphi$ be an absolutely irreducible character of $G$ afforded by some simple left $EG$-module $V.$ Then there is a simple $kG$-module $U$, unique up to isomorphism, s.t. $\varphi$ occurs in the character $\chi$ afforded by $U.$ Then $V$ occurs as a summand in the decomposition of $E \otimes_{k} U$, say $V=V_1$ and \[\chi=m \sum_{i=1}^{t} \varphi_i, ~~~k(\varphi_1) \cong \ldots \cong k(\varphi_t),\] where $\varphi_i$ is the character of $G$ afforded by $V_i,$ and $k(\varphi_i)$ is the field of character values.
\end{itemize}
\end{theorem}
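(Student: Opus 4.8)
The plan is to realize this as an instance of Galois descent along the finite Galois extension $E/k$, whose group I write $\Gamma = \Gal(E/k)$. The basic tool is the semilinear action of $\Gamma$ on $E \otimes_k U$ defined on the coefficients, $\sigma(\lambda \otimes u) = \sigma(\lambda) \otimes u$; this is additive, $\Gamma$-semilinear over $E$, and commutes with the $G$-action, so it permutes the $EG$-submodules of $E \otimes_k U$ and, in particular, the isomorphism classes of simple constituents. The fundamental theorem of Galois descent for vector spaces identifies the fixed points $(E \otimes_k U)^\Gamma \cong E^\Gamma \otimes_k U = U$ and, more generally, sets up an inclusion-preserving bijection between $\Gamma$-stable $E$-subspaces of $E \otimes_k U$ and $k$-subspaces of $U$; since $\Gamma$-stability together with $G$-stability on the left corresponds to $kG$-stability on the right, this restricts to a bijection between $\Gamma$-stable $EG$-submodules of $E \otimes_k U$ and $kG$-submodules of $U$.

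First I would prove part (i). Because $E/k$ is finite and separable, extension of scalars preserves semisimplicity: the radical satisfies $\mathrm{rad}(EG) = E \otimes_k \mathrm{rad}(kG)$, so the simple (hence semisimple) $kG$-module $U$ becomes a semisimple $EG$-module after tensoring. Write $E \otimes_k U \cong \bigoplus_j V_j^{\oplus m_j}$ for its isotypic decomposition into pairwise non-isomorphic simple $EG$-modules $V_j$; each $V_j$ is absolutely simple since $E$ splits $G$. The semilinear $\Gamma$-action permutes the isotypic components, preserving their $E$-dimensions and hence the multiplicities along each orbit. Transitivity is the crux: if the $V_j$ fell into two or more $\Gamma$-orbits, the sum of the isotypic components over a single orbit would be a proper, nonzero, $\Gamma$-stable $EG$-submodule, and under the descent bijection above it would produce a proper nonzero $kG$-submodule of $U$, contradicting simplicity. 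Hence $\Gamma$ acts transitively on $\{V_1, \ldots, V_t\}$, all multiplicities coincide with a single value $m$, and $E \otimes_k U \cong (V_1 \oplus \cdots \oplus V_t)^{\oplus m}$. To identify $m$ with the Schur index I would compute $\End_{EG}(E \otimes_k U) \cong E \otimes_k \End_{kG}(U)$; writing $D = \End_{kG}(U)$ for the division algebra with center $F = Z(D)$ (which embeds into $E$ and is therefore separable over $k$), the splitting hypothesis gives $E \otimes_k D \cong \prod_{j=1}^{t} M_m(E)$ with $t = [F:k]$ and $m^2 = [D:F]$, so $m = m_k(V_j)$ is precisely the Schur index, and the count of constituents is confirmed.

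For part (ii) I would run the correspondence in the reverse direction. Given an absolutely irreducible $EG$-module $V$ affording $\varphi$, every simple $EG$-module occurs in $EG = E \otimes_k kG$, hence in $E \otimes_k S$ for some simple composition factor $S$ of the regular module; thus $V$ is a constituent of $E \otimes_k U$ for some simple $kG$-module $U$. Uniqueness follows from $\mathrm{Hom}_{EG}(E \otimes_k U, E \otimes_k U') \cong E \otimes_k \mathrm{Hom}_{kG}(U, U')$, which vanishes for non-isomorphic simple $U, U'$; so distinct simple $kG$-modules yield disjoint families of $EG$-constituents, and $U$ is determined by $V$. Applying part (i) to this $U$ and relabeling so that $V = V_1$, taking characters yields $\chi = m \sum_{i=1}^{t} \varphi_i$. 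Finally, since the $V_i$ are the $\Gamma$-conjugates of $V_1$, their characters satisfy $\varphi_i = \varphi_1 \circ \sigma_i^{-1}$ for suitable $\sigma_i \in \Gamma$, so the fields of character values $k(\varphi_i)$ are the Galois conjugates of $k(\varphi_1) \cong F$ and are therefore all isomorphic.

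I expect the main obstacle to be the transitivity step together with the descent dictionary that powers it: one must check carefully that the semilinear $\Gamma$-action genuinely commutes with the $G$-action and that the correspondence between $\Gamma$-stable $EG$-submodules and $kG$-submodules is a bijection, so that a nontrivial orbit decomposition really does contradict the simplicity of $U$. The secondary technical point is the semisimplicity of $E \otimes_k U$ over the possibly non-semisimple algebra $EG$ (the modular case), which I would handle uniformly through the separability identity $\mathrm{rad}(EG) = E \otimes_k \mathrm{rad}(kG)$ rather than by assuming $\mathrm{char}\,k \nmid |G|$.
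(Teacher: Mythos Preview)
The paper does not supply its own proof of this theorem: it is quoted verbatim from Curtis--Reiner \cite[(74.5)]{CuRe2} and used as a black box in the proof of Theorem~\ref{Inequality}. Your Galois-descent outline is a correct sketch of the standard argument (and is in fact close to the proof given in Curtis--Reiner): the semilinear $\Gamma$-action on $E\otimes_k U$, the descent bijection between $\Gamma$-stable $EG$-submodules and $kG$-submodules, the transitivity-from-simplicity step, and the identification of $m$ via $E\otimes_k \End_{kG}(U)$ are all sound. Since there is no in-paper proof to compare against, there is nothing further to contrast.
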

Now we are ready to prove Theorem~\ref{Inequality}.
\begin{proof}
The assumption of the Theorem~\ref{orbit} is satisfied for $k=\mathds{Q}$ and $E=K,$ therefore, it gives one to one correspondence between rational irreducible representations of $G$ and orbits of the Galois group $\Gal(K / \mathds{Q})$ action on the set of irreducible $K$-representations of $G$. In particular, it gives one to one correspondence between the set $I_p$ and orbits of the $\Gal(K / \mathds{Q})$-action on the set of $p$-special representations of $G.$ 

Given a $p$-special representation $\phi$ of $G$ let $\varphi$ be its character. Denote by $E_{\phi}$ the field of character values $\mathds{Q}(\varphi).$ Then by \cite[Theorem 70.15]{CuRe} the size of the orbit of $\phi$ under the action of $\Gal(K/\mathds{Q})$ is given by
\[|Orb(\phi)|=|E_\phi : \mathds{Q}|=|\Gal(E_\phi / \mathds{Q})|.\]

Next we examine the orbit $Orb(\phi)$ after the reduction mod $p.$ Since the irreducible representation $\phi$  is $p$-special the Proposition~\ref{Injectivity} implies that all Galois conjugates of $\phi$ remain irreducible when reduced mod $p$. Denote by $\bar{\phi}$ the reduction mod~$p$ of $\phi,$ and by $\overline{Orb(\phi)}$ the set $Orb(\phi)$ after the reduction mod $p$. By Proposition~\ref{Injectivity} the set $\overline{Orb(\phi)}$ consists of pairwise non-isomorphic irreducible $\bar{K}$-representations. This implies that the set $\overline{Orb(\phi)}$ has the same number of elements as $Orb(\phi).$ 

It is well known that the decomposition group $\mathfrak{D}(\mathfrak{p}/p)=\{\sigma \in \Gal(K/\mathds{Q}) ~|~ \sigma(\mathfrak{p})=\mathfrak{p}\}$ naturally surjects onto the Galois group $\Gal( \bar{K} / \mathds{F}_p)$ with the kernel being the inertia group $\mathfrak{I}(\mathfrak{p}/p)$ (see e.g. \cite[\S 7]{Se2}). Therefore the Galois group $\Gal( \bar{K} / \mathds{F}_p )$ preserves the set $\overline{Orb(\phi)}.$

Let us see how many orbits does this action have. Let $\bar{\phi^{\sigma}} \in \overline{Orb(\phi)},~\sigma \in \Gal(K/\mathds{Q}).$  Again by \cite[Theorem 70.15]{CuRe} the size of the orbit of $\bar{\phi^{\sigma}}$ under the action of $\Gal( \bar{K} / \mathds{F}_p )$ is given by
\[|Orb(\bar{\phi^{\sigma}})|=|\mathds{F}_p(\bar{\varphi^{\sigma}}) : \mathds{F}_p|,\] where $\bar{\varphi^{\sigma}}$ is the character of $\bar{\phi^{\sigma}},$ and $\mathds{F}_p(\bar{\varphi^{\sigma}})$ is the field of character values. The way the process of reduction mod $p$ is defined implies that $\mathds{F}_p(\bar{\varphi^{\sigma}}) \subset \mathcal{O}_{\phi^{\sigma}}/ (\mathfrak{p} \cap \mathcal{O}_{\phi^{\sigma}}) \cong \mathcal{O}_{\phi}/ (\sigma^{-1}\mathfrak{p} \cap \mathcal{O}_{\phi}),$ where $\mathcal{O}_{\phi}$ is the ring of integers of the number field $E_{\phi}.$ For short denote the prime ideal $\sigma^{-1}\mathfrak{p} \cap \mathcal{O}_{\phi}$ by $\mathfrak{q}.$ Hence for each $\bar{\phi^{\sigma}} \in \overline{Orb(\phi)}$ we have 
\[|Orb(\bar{\phi^{\sigma}})| \leq |(\mathcal{O}_{\phi}/ \mathfrak{q}) : \mathds{F}_p|,\]
where $|Orb(\bar{\phi^{\sigma}})|$ means the same as before.
Therefore the number of orbits of the action of $\Gal( \bar{K} / \mathds{F}_p )$ on the set $\overline{Orb(\phi)}$ is at least

\[\frac{|\overline{Orb(\phi)}|}{|(\mathcal{O}_{\phi}/\mathfrak{q}) : \mathds{F}_p|}=\frac{|\Gal(E_{\phi} / \mathds{Q})|}{|\Gal( (\mathcal{O}_{\phi}/\mathfrak{q}) / \mathds{F}_p )|}=\frac{|\Gal(E_\phi / \mathds{Q})|}{|\mathfrak{D}(\mathfrak{q}/p)|} |\mathfrak{I}(\mathfrak{q}/p)|=t_{\rho} |\mathfrak{I}(\mathfrak{q}/p)| .\]
The last equality holds since $E_{\phi}=E_{\rho}$ and the Galois group $\Gal(E_\phi / \mathds{Q})$ acts transitively on prime ideals in $\mathcal{O}_{\phi}$ dividing $p$, and $\mathfrak{D}(\mathfrak{q}/p)$ is exactly the stabilizer of a prime ideal $\mathfrak{q}$ by $\Gal(E_\phi / \mathds{Q}).$

Since $\bar{K}$ is a splitting field for $G$ (\cite[(83.7)]{CuRe}) and $\bar{K}$ is a finite Galois extension of $\mathds{F}_p,$ we may apply Theorem~\ref{orbit} to $k=\mathds{F}_p$ and $E=\bar{K}.$ As before it gives one to one correspondence between irreducible $\mathds{F}_p$-representations of $G$ and orbits of the $\Gal(\bar{K} / \mathds{F}_p)$-action on the set of irreducible $\bar{K}$-representations of $G$. 

From what is computed above it follows that an irreducible rational representation $\rho \in I_p$ gives rise to at least $t_{\rho} |\mathfrak{I}(\mathfrak{q}/p)|$ different irreducible $\mathds{F}_p$-representations of $G,$ that correspond to orbits of $\Gal(\bar{K} / \mathds{F}_p)$-action on $\overline{Orb(\phi)},$ where $\phi$ is an irreducible $K$-representation occurring in decomposition of $K \otimes_{\mathds{Q}} \rho.$ Moreover, Proposition~\ref{Injectivity} and Theorem~\ref{orbit} guarantee that sets of corresponding $\mathds{F}_p$-representations coming from two non-isomorphic representations $\rho_1, \rho_2  \in I_p$ do not have common elements, because the corresponding sets $\overline{Orb(\phi_1)}$ and $\overline{Orb(\phi_2)}$ have empty intersection, where $\phi_1, \phi_2$ are irreducible $K$-representations occurring in decomposition of $K \otimes_{\mathds{Q}} \rho_1, K \otimes_{\mathds{Q}} \rho_2,$ respectively. Therefore we get  
\[\# \{ \text{irreducible $\mathds{F}_{p}$-representations of $G$} \} \geq \sum_{\rho \in I_p}  t_{\rho} |\mathfrak{I}(\mathfrak{q}/p)| \geq \sum_{\rho \in I_p}  t_{\rho}.\]

\end{proof}

We have the following corollary from Theorem~\ref{Inequality}.
\begin{corollary}\label{ranks-comparison}
For any finite group $G$ the following inequality holds
\[P(G) \geq R(G).\]
\end{corollary}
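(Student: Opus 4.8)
The plan is to deduce this inequality by a direct comparison of the formulas for $R(G)$ and $P(G)$ obtained in Section~1, invoking Theorem~\ref{Inequality} once for each prime dividing $|G|$. First I would subtract the two explicit expressions $R(G)=\sum_{\rho\in X(G)}(r_\rho+v_\rho)-\varepsilon$ and $P(G)=\sum_{\rho\in X(G)}(r_\rho+w_\rho)$ to obtain
\[
P(G)-R(G)=\varepsilon-\sum_{\rho\in X(G)}\bigl(v_\rho-w_\rho\bigr),
\]
so that the assertion becomes equivalent to the inequality $\sum_{\rho\in X(G)}(v_\rho-w_\rho)\leq\varepsilon$.

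Next I would rewrite each difference $v_\rho-w_\rho$ prime by prime. Since $\omega_\rho$ is an integer dividing $|G|$ (recall $\omega_\rho k_\rho d_\rho=|G|$), and since in the Dedekind ring $\mathcal{O}_\rho$ a prime ideal divides a rational integer $m$ exactly when the rational prime it lies over divides $m$, we get $v_\rho=\sum_{p\mid|G|}t_{\rho,p}$ and $w_\rho=\sum_{p\mid\omega_\rho}t_{\rho,p}$, where $t_{\rho,p}$ denotes the number of primes of $\mathcal{O}_\rho$ above $p$. Hence $v_\rho\geq w_\rho$ and, more precisely, $v_\rho-w_\rho=\sum_{p\mid|G|,\,p\nmid\omega_\rho}t_{\rho,p}$. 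Summing over $\rho$ and interchanging the two summations,
\[
\sum_{\rho\in X(G)}(v_\rho-w_\rho)=\sum_{p\mid|G|}\ \ \sum_{\rho\in I_p}t_{\rho,p},
\]
where $I_p$ is exactly the set of rational irreducible representations with $p\nmid\omega_\rho$ appearing in Theorem~\ref{Inequality}, and $t_{\rho,p}$ is the quantity called $t_\rho$ there.

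Finally I would apply Theorem~\ref{Inequality} to each prime $p$ dividing $|G|$, obtaining $\#\{\text{irreducible }\mathds{F}_pG\text{-modules}\}\geq\sum_{\rho\in I_p}t_{\rho,p}$, and sum these inequalities over all such $p$. Since by Berman's theorem $\varepsilon=\sum_{p\mid|G|}\#\{\text{irreducible }\mathds{F}_pG\text{-modules}\}$, this yields $\varepsilon\geq\sum_{\rho\in X(G)}(v_\rho-w_\rho)$ and hence $P(G)\geq R(G)$. I do not expect a genuine obstacle here, as all the substance is contained in Theorem~\ref{Inequality}; the one point that deserves a line of care is the identification of $v_\rho$ and $w_\rho$ with the sums of the local counts $t_{\rho,p}$, which rests on unique factorization of ideals in $\mathcal{O}_\rho$ together with the fact that the primes of $\mathcal{O}_\rho$ dividing $\omega_\rho$ are precisely those lying over the rational primes dividing $\omega_\rho$.
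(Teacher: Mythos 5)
Your proposal is correct and follows essentially the same route as the paper: both decompose $v_\rho$ and $w_\rho$ prime by prime into the local counts $t_\rho$, interchange the summations, and apply Theorem~\ref{Inequality} to each prime dividing $|G|$ together with the formula $\varepsilon=\sum_{p\mid |G|}\#\{\text{irreducible }\mathds{F}_pG\text{-modules}\}$. No essential difference from the paper's argument.
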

\begin{proof}
Note that
\[w_{\rho}=\sum_{p | \omega_{\rho}} \#\{\text{prime ideals in } \mathcal{O}_{\rho} \text{ that divide } p\}=\sum_{p | \omega_{\rho}} t_{\rho}\]
\[v_{\rho}=\sum_{p |~ |G|} \#\{\text{prime ideals in } \mathcal{O}_{\rho} \text{ that divide } p\}=\sum_{p |~ |G|} t_{\rho}\]

\[P(G)-R(G)=\varepsilon+\sum_{\rho \in X(G)} (w_{\rho}-v_{\rho})=\sum_{p \mid~ |G|}( \# \{ \text{irr. $\mathds{F}_{p}$-rep. of $G$} \}-\sum_{\substack{\rho \in X(G)\\ p  \nmid \omega_{\rho}}} t_{\rho})\]
\[=\sum_{p \mid~ |G|}( \# \{ \text{irreducible $\mathds{F}_{p}$-representations of $G$} \}-\sum_{\rho \in I_p} t_{\rho}) \geq 0,\]
where the last inequality holds by Theorem~\ref{Inequality} applied to each summand.
\end{proof}

\section{Concluding remarks}

In addition to what is done in Section 2 it is easy to compute both values $P(G), R(G)$ using the description provided in Section 1. For $G=\SL(2, \mathds{F}_3)$ these are $P(G)=6, R(G)=5.$

Using the computer algebra system GAP \cite{GAP4} we computed the difference $P(G)-R(G)$ for all finite groups of order less than 200. It turned out that all the groups that violate the condition $P(G)=R(G)$ (and are automatically counterexamples to the HTW-decomposition) have even order. Therefore it still seems reasonable to expect the Hambleton-Taylor-Williams Conjecture to be true for $G$ a finite group of odd order. By the famous Feit-Thompson Theorem \cite{FeTh} every finite group of odd order is solvable. Therefore expecting the HTW-decomposition for groups of odd order we do not go beyond the class of solvable groups.

All finite groups of order less than 24 satisfy the HTW-conjecture, so $G=\SL(2, \mathds{F}_3)$ is the smallest counterexample.  

If it is possible to correct the HTW-decomposition following the same pattern but choosing different numbers $\omega_{\rho}$ to be inverted, then these new numbers $\omega_{\rho}$ should satisfy the following relation
$$\sum_{\rho \in X(G)} \#\{\text{prime ideals in } \mathcal{O}_{\rho} \text{ that divide } \omega_{\rho}\}-\sum_{\rho \in X(G)} v_{\rho}+ \varepsilon=0.$$

The inequality $P(G) \geq R(G)$ proved in Corollary~\ref{ranks-comparison} leads to the natural guess that the weaker version of the HTW-Conjecture may hold. Namely, instead of asking for the isomorphism in the HTW-decomposition, one might conjecture that there is an injective homomorphism  $$G_n(\mathds{Z} G) \xhookrightarrow{} \bigoplus_{\rho \in X(G)} G_n(\Lambda_{\rho}), ~~\forall n \geq 0.$$

\end{document}